\documentclass{amsart}

%%%%%%%%%%%%%%%%%%%%%%%%%%%%%%%%%%%%%%%%%%
\usepackage{geometry}                
\usepackage[applemac]{inputenc}
\usepackage[T1]{fontenc}
\usepackage{bbm}
\usepackage{comment}
\usepackage{xcolor}
\usepackage{tikz-cd}
\usepackage{amsmath,amsfonts,amssymb,amsthm}
\usepackage{enumitem}
\usepackage{caption}
\setlist[enumerate]{wide=0pt, widest=99,leftmargin=\parindent, labelsep=*}
\setlist[enumerate]{align=left}
\usepackage{mathabx}

\theoremstyle{plain}
\newtheorem{Theorem}{Theorem}[section]
\newtheorem{Proposition}[Theorem]{Proposition}
\newtheorem{Lemma}[Theorem]{Lemma}
\newtheorem{Corollary}[Theorem]{Corollary}
\newtheorem{Scholium}[Theorem]{Scholium}
\newtheorem{Fact}[Theorem]{Fact}
\theoremstyle{definition}
\newtheorem{Definition}[Theorem]{Definition}
\newtheorem{Example}[Theorem]{Example}

\theoremstyle{remark}
\newtheorem{Remark}[Theorem]{Remark}

\DeclareMathOperator{\Ob}{Ob}

\newcommand{\tto}{\longrightarrow}

\newcommand{\C}{\mathcal{C}}

\newcommand{\Set}{\mathbf{Set}}

\newcommand{\vFld}{\mathbf{vFld}}
\newcommand{\vHyp}{\mathbf{vHyp}}

\title{A result of Krasner in categorial form}

\author{Alessandro Linzi}

\address{Centre for Information Technologies and Applied Mathematics, University of Nova Gorica \hbox{(Slovenia).}}

\email{alessandro.linzi@ung.si}

\keywords{Valuation; completion; field; hyperfield; category-theoretic algebra.} 

\thanks{The author would like to express his gratitude to Franz-Viktor  Kuhlmann and Pierre Touchard for the numerous discussions had with the author on the topic of the paper. Many thanks also to Irina Cristea for her invaluable support.}

\begin{document}

\maketitle

\begin{abstract}
In 1957 M.\ Krasner described a complete valued field $(K,v)$ via the projective limit of a system of certain structures, called hyperfields, associated to $(K,v)$. We put this result in purely category-theoretic terms by translating into a limit construction in certain slice categories of the category of valued hyperfields and their homomorphisms.
\end{abstract}

\section{Introduction}

If one considers the operations of classical algebraic structures (such as groups, rings, fields,\ldots) by looking at their graphs, then one sees that they satisfy two fundamental assumptions: they are left total and functional. In other words, these properties can be spelled out as operations being \emph{everywhere defined} (i.e., the operation can be applied to any two elements to obtain at least one result) and \emph{single-valued} (i.e., an application of the operation to any two elements yields at most one result). A hyperfield is a field-like structure where the latter property is relaxed for the additive operation. In the literature, such structures appear perhaps more than one would expect: hyperfields are of interest e.g.\ in tropical geometry \cite{Vir10,Jun21,JSY22}, symmetrization \cite{Hen13,Jun18,Row22}, projective geometry \cite{CC11}, valuation theory \cite{KLS22,Lee20,LT22,Gla17} and ordered algebra \cite{Gla10,GM12,LKS23}. There are even reasons to believe that their theory generalises field theory in ways that can be used to tackle deep problems such as the description of $\mathbb{F}_1$, the "field of characteristic one" (cf.\ \cite{Tit57,CC11}). 

More generally, since the pioneer papers \cite{Mar34,Mar35,Mar36} of F.\ Marty, structures with multivalued operations (also called hyperstructures) generalising classical singlevalued structures have been the object of several research projects. For example, modules with a multivalued operation and a scalar multiplication over Krasner hyperrings (the old brothers of hyperfields) have been studied e.g.\ in \cite{NC17,BCK20,BC22}. In addition, hypergroups are mentioned in a journal of theoretic physics, in \cite{JJ17}.

In this paper, we focus on valued hyperfields, generalising valued fields: the object of study of classical valuation theory. To any valued field $(K,v)$, Krasner's quotient construction associates a projective system $\mathcal{H}$ of valued hyperfields, indexed by the non-negative elements of the value group. The projective limit of the system $\mathcal{H}$ is a valued field isomorphic (as a valued field) to the valuation-theoretic completion of $(K,v)$. This result was proved by Krasner in \cite{Kra57} to achieve an approximation result for complete valued fields of positive characteristic by means of other fields of characteristic $0$. His proof, makes heavy use of the ultrametric structure of the hyperfields in $\mathcal{H}$ induced by the canonical one of $(K,v)$. On the other hand, it can be noted that, while valuation maps generalise readily to the setting of hyperfields, a valuation on a hyperfield needs not to induce an ultrametric distance as in the singlevalued case. Krasner's broader aim led him to prefer the approach of declaring \lq\lq valued\rq\rq\ only those hyperfields which admit a valuation map inducing (in a prescribed way) an ultrametric on the underlying set. However, examples of hyperfields with interesting valuation maps which do not induce an ultrametric as required by Krasner are now known (see e.g.\ \cite[Example 4.3]{KLS22}). Among these, \emph{generalised tropical hyperfields} \cite[Example 2.14]{Lin23} happen to encode ordered abelian groups as valued hyperfields. 

In addition, without the ultrametric condition, valuations on (hyper)fields become nothing but homomorphisms with a specific target, making the category $\vHyp$ of valued hyperfields and their homomorphisms a natural framework (also) for classical valuation theory.

In this article, we propose the question whether the metric properties of the members and of the projections of the system associated to valued fields (such as $\mathcal{H}$), can somehow be captured in the just mentioned category $\vHyp$, while respecting the principle of equivalence.

We show in fact that the metric properties of the systems $\mathcal{H}$ are \emph{local} in $\vHyp$, in the sense that they can be deduced by seeing $\mathcal{H}$ as a diagram in the slice categories of $\vHyp$ by (suitable) generalised tropical hyperfields. There, Krasner's result goes through as a clean limit construction and valuation-theoretic completions of valued fields are hence described as vertices of such limit cones.

We organised the paper by first explaining briefly the necessary category-theoretic backgroung and the notation that we adopt to treat it. Then we survey the algebraic theory of hyperfields as well as Krasner's quotient construction associating to a valued field the projective system $\mathcal{H}$ of valued hyperfields mentioned above. We define the category $\vHyp$ of valued hyperfields and explain how ordered abelian groups (OAGs) can be interpreted as objects in it. Then, by passing to slice categories over OAG-extensions of the value group of a valued field $(K,v)$, we show how its associated system forms a diagram in those slice categories and that the limit of this diagram is isomorphic as a valued field to the completion of $(K,v)$. 

\section{Category Theoretic Preliminaries and Terminology}

Many references for category theory may be cited, which cover the necessary background for the scope of this paper. The classic \cite{ML71} is certainly one of them and we found particularly useful the following books as well \cite{Awo10, Sim11, Lei14}. 

Since we believe that one way to better appreciate our contribution is to be as precise as possible with terminology and notation, we shall assume only some familiarity with the concepts of category, small category, and functor during this preliminary section. 

As for basic notations, for a category $\C$, we write $A\in\Ob(\C)$ to mean that $A$ is an object in $\C$ and, for any ordered pair $(A,B)$ of objects in $\C$, we denote the set of arrows $f:A\tto B$ in $\C$ by $\C(A,B)$. For the composition of arrows the symbol $\circ$ will be employed and the identity $\C$-arrow of $A\in\Ob(\C)$ will be written as $1_A:A\tto A$.

If $C$ is an object in $\C$, then the \emph{slice} category $\C/C$ has $\C$-arrows $f:A\tto C$, with $A\in\Ob(\C)$, as objects, while a $\C$-arrow $a:A\tto B$ is an arrow 
\[
\begin{tikzcd}[column sep=large,every label/.append style = {font = \small}]
A\arrow[d,"~f"{name=F}, near start]&&B\arrow[d,"g~"' {name=G}, near start]&\\
C&&C
\ar[from=F,to=G,shorten=2mm]
\end{tikzcd}
\] 
in $\C/C$ if and only if the following triangular diagram:
\[
\begin{tikzcd}[column sep=huge, row sep=small]
A\arrow[dd,"a"']\arrow[dr,"f"]&\\
&C\\
B\arrow[ur,"g"']&
\end{tikzcd}
\]
is commutative in $\C$, i.e., $g\circ a=f$.

The concept of limit in a category is fundamental and, partly due to its generality, admits many equivalent definitions. The terminology we adopt for limits is similar to that of \cite{Lei14}, which seemed to be the most appropriate in this case. Let us go through a brief recap.

\subsection{Limits}

Fix a category $\C$. If $\mathcal{S}$ is a small category, then a functor $D:\mathcal{S}\tto\C$ is called a \emph{diagram in $\C$ of shape $\mathcal{S}$}. A \emph{cone} on a diagram $D:\mathcal{S}\tto\C$ consists of an object $V$ in $\C$, called the \emph{vertex} of the cone, together with a family, indexed by the collection of objects in $\mathcal{S}$,
\begin{equation}\label{cone}
\begin{tikzcd}[sep=huge]
\bigl(~V\arrow[r,"s_I"]& D(I)~\bigr)_{I\in\Ob(\mathcal{S})}
\end{tikzcd}
\end{equation}
of $\C$-arrows, called the \emph{sides} of the cone such that the following triangular diagram:
\[
\begin{tikzcd}[column sep=huge, row sep=small]
&D(I)\arrow[dd,"D(f)"]\\
V\arrow[dr,"s_J"']\arrow[ur,"s_I"]&\\
&D(J)
\end{tikzcd}
\]
is commutative, for all arrows $f\in \mathcal{S}(I, J)$.

A cone over a diagram $D:\mathcal{S}\tto\C$,
\[
\begin{tikzcd}[sep=huge]
\bigl(~L\arrow[r,"p_I"]& D(I)~\bigr)_{I\in\Ob(\mathcal{S})}
\end{tikzcd}
\]
is called a \emph{limit cone} if it satisfies the following \emph{universal property}: 
\begin{quote}
for any cone on $D$ as in \eqref{cone}, there exists a unique arrow $h:V\longrightarrow L$ such that $p_I\circ h=s_I$ holds, for all $I\in\Ob(\mathcal{S})$.
\end{quote}
By a \emph{limit} of a diagram $D:\mathcal{S}\tto\C$ we mean the vertex of a limit cone over $D$.

An \emph{isomorphism} between objects $A,B$ in a category $\C$ is a $\C$-arrow $f:A\tto B$ with the property that a $\C$-arrow $f^{-1}:B\tto A$ exists such that $f^{-1}\circ f=1_A$ and $f\circ f^{-1}=1_B$. The notation $f^{-1}$ used for this arrow suggests that from the mere existence, uniqueness follows too, which is in fact well-known to be the case.

The uniqueness of the arrows whose existence is guaranteed by the universal property of limit cones, implies that, when they exist, limit cones are \emph{unique up to }(a unique) \emph{isomorphism} (of cones). In particular, limits in a category $\C$ are unique up to (a unique) $\C$-isomorphism. It is up to this isomorphism that we speak of \emph{the} limit of a diagram.

The sides of limit cones are often called \emph{projections}. This name comes from the analogy with the limit of diagrams of shape $\mathbf{2}$, that is, the category consisting of $2$ objects with their identity arrows solely. The latter specially shaped limits are called \emph{(binary) products}. In fact, in the category $\Set$ of sets and functions, their vertex is the familiar cartesian product of sets, while their sides are nothing but the projections onto its components. For the product $(A\times B,p_1,p_2)$ of two objects $A_1,A_2\in\Ob(\C)$, where $p_i:A_1\times A_2\tto A_i$ denote the projections ($i=1,2$), the universal property of limits has the following form:
\begin{quote}
for any object $B$ in $\C$ admitting two arrows $f_i:B\tto A_i$ ($i=1,2$) in $\C$, there exists a unique arrow $f_1\times f_2:B\tto A_1\times A_2$ such that the following diagram:
\[
\begin{tikzcd}[column sep=huge, row sep=small]
&&A_1\\
B\arrow[r,"f_1\times f_2"]\arrow[urr,bend left,"f_1"]\arrow[drr,bend right,"f_2"']&A_1\times A_2\arrow[dr,"p_2"']\arrow[ur,"p_1"]&\\
&&A_2
\end{tikzcd}
\]
is commutative.
\end{quote}

Another specially shaped limit, which is named \emph{terminal object}, is defined in a category $\C$ as the limit cone of the unique diagram $\emptyset\tto\C$, where $\emptyset$ denotes the category with no objects and, consequently, no arrows (the \emph{empty category}). If $T$ is a terminal object in $\C$, then the universal property of limits has the following form:
\begin{quote}
for any object $C$ in $\C$, there exists a unique arrow $!_C:C\longrightarrow T$.
\end{quote}

When limit cones on diagrams of a certain shape $\mathcal{S}$ exist in a category $\C$, then one says that $\C$ \emph{has limits of shape} $\mathcal{S}$. One then usually simplifies the terminology further in case the particular shape has been given a name. For instance, phrases like \lq\lq$\C$ has products\rq\rq\ or \lq\lq$\C$ has a terminal object\rq\rq\ mean that $\C$ has limits of shape $\mathbf{2}$ and $\emptyset$, respectively.

\begin{Remark}
It is important to keep in mind that uniqueness up to isomorphism does not necessarily mean absolute uniqueness\footnote{Following the remark on terminology in \cite[Page x]{Gol84}, one may phrase this as \lq\lq categorial uniqueness is not categorical\rq\rq.}. For example, in the category $\Set$ of sets and functions, where isomorphisms are bijections, all singleton sets are terminal objects.
\end{Remark}

\section{Valued Fields and Hyperfields}

Let $(K,+,\cdot,0,1)$ be a field and $(\Gamma,\leq,+,0)$ a linearly ordered abelian group (always denoted additively)\footnote{Note that we use the same symbols to denote the additive structure of $K$ and the abelian group structure of $\Gamma$. This is standard practice and will cause no confusion.}. That is, $\Gamma$ is an abelian group equipped with a linear order relation $\leq$ and an abelian group structure whose operation $+$ is compatible with $\leq$, i.e., the following implication:
\[
\gamma\leq\delta\quad\implies\quad \gamma+\varepsilon\leq\delta+\varepsilon
\] 
holds, for all $\gamma,\delta,\varepsilon\in\Gamma$. A map $v:K\tto\Gamma\cup\{\infty\}$, where $\infty$ is a symbol such that $\gamma+\infty=\infty+\gamma=\infty>\gamma$ for all $\gamma\in\Gamma$, is called a \emph{(Krull) valuation on }$K$ if and only if it satisfies all of the following three properties:
\begin{enumerate}
\item[(VAL1)] $v(x)=\infty$ if and only if $x=0$, for all $x\in K$.\label{first-item}
\item[(VAL2)] $v(xy)=v(x)+v(y)$, for all $x,y\in K$.
\item[(VAL3)] $v(x+y)\geq\min\{v(x),v(y)\}$, for all $x,y\in K$.\label{last-item}
\end{enumerate}

If a valuation $v$ on a field $K$ is given, then $(K,v)$ is called a \emph{valued field}, while the image of $v$ in $\Gamma$, denoted by $vK$, is called the \emph{value group of }$(K,v)$. The \emph{value} $v(x)$ of $x\in K$ will be written as $vx$ whenever no risk of confusion arises. If $(K,v)$ is a valued field, then
\[
\mathcal{O}_v:=\{x\in K\mid vx\geq 0\}
\]
is a subring of $K$, called the \emph{valuation ring} of $(K,v)$. It determines the valuation map $v$ up to valuation-equivalence, i.e., up to composition with an order preserving isomorphism of the value group. The prime ideals of the valuation ring $\mathcal{O}_v$ are linearly ordered by set-inclusion and have the following form:
\[
\mathfrak{m}^\Delta_v:=\{x\in K\mid vx>\delta,\text{ for all }\delta\in\Delta\},
\]
where $\Delta$ is a convex subgroup of $vK$ (see \cite[Lemma 2.3.1]{PE05}). The ideal $\mathfrak{m}_v:=\mathfrak{m}^{\{0\}}_v$, corresponding to the trivial convex subgroup $\{0\}$ of $vK$, is the unique maximal ideal of $\mathcal{O}_v$. The field $Kv$, defined as the quotient ring $\mathcal{O}_v/\mathfrak{m}_v$, is called the \emph{residue field} of $(K,v)$.

A \emph{homomorphism of valued fields} from $(K,v)$ to $(L,w)$ can be defined as a homomorphism of fields $\sigma:K\tto L$ such that $\sigma(\mathcal{O}_v)\subseteq\mathcal{O}_w$. The latter condition is sometimes phrased as \lq\lq$\sigma$ preserves the valuation\rq\rq. Since homomorphisms of valued fields are in particular homomorphisms of fields, they are automatically injective and will thus sometimes be called \emph{embeddings}. We say that $(L,w)$ is a \emph{valued field extension} of $(K,v)$ if $K\subseteq L$ and the inclusion map is an embedding of valued fields. In this way, valued fields and their homomorphisms form a category $\vFld$, which is a subcategory of $\Set$. By an \emph{isomorphism of valued fields} we mean an isomorphism in $\vFld$, namely, a bijective homomorphism of valued fields whose inverse (as a function) is an arrow in $\vFld$ too. This can be spelled out further as follows: a function $\sigma:K\tto L$ is an isomorphism of valued fields $(K,v)\simeq(L,w)$ if and only if it is an isomorphism of fields $K\simeq L$ such that $\sigma(\mathcal{O}_v)=\mathcal{O}_w$.  

Fix now a valued field $(K,v)$. There is a smallest ordinal\footnote{Such ordinal $\kappa$ is usually called the \emph{cofinality} of $vK$} $\kappa$ serving as the index set of a sequence $(\gamma_\nu)_{\nu<\kappa}$ that is cofinal in $vK$, i.e., such that for each $\delta\in vK$ there exists $\nu<\kappa$ such that $\delta<\gamma_\nu$. We say that a sequence $(x_\nu)_{\nu<\kappa}$ of elements of $K$ is a \emph{Cauchy sequence} if and only if 
\begin{quote}
\mbox{for every $\gamma\in vK$ there exists $\nu_0<\kappa$ such that if $\nu_0\leq\nu,\mu<\kappa$, then $v(x_\nu-x_\mu)>\gamma$.}
\end{quote}
A sequence $(x_\nu)_{\nu<\kappa}$ is, instead, said to be \emph{convergent} to an element $x$ belonging to some valued field extension $(L,w)$ of $(K,v)$ if and only if 
\begin{quote}
for every $\gamma\in wL$ there exists $\nu_0<\kappa$ such that if $\nu_0\leq\nu<\kappa$, then $w(x-x_\nu)>\gamma$. 
\end{quote}
If the latter property happens to hold, then we also say that the sequence $(x_\nu)_{\nu<\kappa}$ converges in $L$. If $(L,w)$ is a valued field extension of $(K,v)$, then we say that $K$ \emph{lies dense} in $L$ if every Cauchy sequence in $K$ converges in $L$, while $(K,v)$ is called \emph{complete} if and only if every Cauchy sequence in $K$ converges in $K$.

\begin{Fact}[Theorem 2.4.3 in \cite{PE05}]\label{completion}
Every valued field $(K,v)$ admits one and (up to isomorphism of valued fields) only one valued field extension $(K^c,v^c)$ -- called the \emph{completion} of $(K,v)$ -- which is complete and in which $K$ lies dense.
\end{Fact}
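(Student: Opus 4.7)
The plan is to realise $K^c$ as a quotient of the ring of Cauchy sequences modulo the ideal of null sequences, and then to verify the required properties by routine but careful checks.

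First I would fix the cofinality $\kappa$ of $vK$ and consider the set $\mathcal{C}$ of all Cauchy sequences $(x_\nu)_{\nu<\kappa}$ in $K$. Using (VAL3) together with the observation that the values $v(x_\nu)$ of any Cauchy sequence are eventually bounded from below, one checks that $\mathcal{C}$ is a commutative ring under componentwise operations. Inside $\mathcal{C}$, the set $\mathcal{N}$ of null sequences (those converging to $0$ in $K$) is an ideal, and I would set $K^c:=\mathcal{C}/\mathcal{N}$. The crucial algebraic step is to show $\mathcal{N}$ is maximal, so that $K^c$ becomes a field: any non-null Cauchy sequence $(x_\nu)$ has $v(x_\nu)$ eventually constant equal to some $\gamma\in vK$, hence is equivalent modulo $\mathcal{N}$ to a sequence with every entry in $K^\times$, whose pointwise inverse is again Cauchy by (VAL2) and thus provides a multiplicative inverse in $K^c$.

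Next I would extend the valuation by setting $v^c([(x_\nu)]):=\lim_\nu v(x_\nu)$ on non-null classes and $v^c(0):=\infty$, checking independence of the representative and verifying (VAL1)--(VAL3) directly from the corresponding axioms for $v$. The constant-sequence map realises $K$ as a sub-valued field of $K^c$, and density is immediate from Cauchyness: given $[(x_\nu)]\in K^c$ and $\gamma\in vK$, choose $\nu_0<\kappa$ with $v(x_\nu-x_{\nu_0})>\gamma$ for all $\nu\geq\nu_0$, which translates into $v^c([(x_\nu)]-x_{\nu_0})>\gamma$. For completeness of $K^c$, given a Cauchy sequence in $K^c$ I would use density to approximate each term by an element of $K$ with successively increasing precision, extract a Cauchy sequence in $K$, and verify that its class in $K^c$ is the required limit.

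For uniqueness, let $(L,w)$ be any complete valued field extension of $(K,v)$ in which $K$ lies dense. Given $[(x_\nu)]\in K^c$, the Cauchy sequence $(x_\nu)$ converges in $L$ by completeness; sending $[(x_\nu)]$ to this limit is well-defined and yields a valued field homomorphism $K^c\tto L$ fixing $K$. Surjectivity is exactly density of $K$ in $L$, and injectivity follows because two Cauchy sequences in $K$ with the same limit in $L$ differ by a null sequence. The step I expect to demand the most care is the maximality of $\mathcal{N}$ and the associated \emph{eventually-constant-value} lemma for non-null Cauchy sequences: there the interplay between the field structure of $K$ and the Cauchy condition is genuinely used. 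Everything else reduces to a sequence of bookkeeping verifications forced by (VAL1)--(VAL3).
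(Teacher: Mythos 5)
The paper does not actually prove this statement: it is imported as a \emph{Fact} with a citation to \cite{PE05}, so there is no internal argument to compare against. Your construction --- Cauchy sequences indexed by the cofinality of $vK$, modulo the maximal ideal of null sequences, with $v^c$ defined via the eventually constant value of a non-null Cauchy sequence --- is the standard proof behind the cited theorem, and it is sound; the steps you flag as delicate (eventual constancy of $v(x_\nu)$, hence maximality of $\mathcal{N}$, and the diagonal argument for completeness of $K^c$) are indeed the ones carrying the weight, and your sketches of them are correct. One caveat worth making explicit: in the uniqueness step, surjectivity of $K^c\tto L$ requires density in the topological sense (every element of $L$ is approximated, to within any $\gamma$, by an element of $K$), which is the notion used in \cite{PE05} and the one your argument tacitly invokes. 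The present paper's stated definition of \lq\lq$K$ lies dense in $L$\rq\rq\ --- every Cauchy sequence in $K$ converges in $L$ --- is strictly weaker: for instance $(\mathbb{Q},v_p)$ \lq\lq lies dense\rq\rq\ in $\mathbb{C}_p$ in that sense, yet $\mathbb{C}_p\not\simeq\Q_p$, so under that literal reading the uniqueness clause would fail. Your proof is therefore correct for the intended (topological) notion of density; you should just state which notion you are using, since it is the stronger one that your surjectivity argument needs.
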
 

An important consequence of the fact that $K$ lies dense in $K^c$ is that the value group $v^cK^c$ and the residue field $K^cv^c$ of $(K^c,v^c)$ are (canonically) isomorphic to $vK$ and $Kv$, respectively (cf.\ \cite[Proposition 2.4.4]{PE05}).

\subsection{Krasner hyperfields}

Hyperfields first appeared in \cite{Kra57}. In introducing them, Krasner was motivated by his interest for certain structures obtained from valued fields by means of the \lq\lq factor (or quotient) construction\rq\rq, which he himself described for the first time in the same article and later in \cite{Kra83}. For the algebraic definition of hyperfields we refer to \cite[Definition 2.7]{Lin23} and references therein. A more categorial treatment of these structures within the category of sets and (total) relations can be found in \cite{Lin23aims}.

The following definition of homomorphism for hyperfields has become standard in the literature.
\begin{Definition}
Let $(H,\boxplus,\cdot,0,1), (H',\boxplus',\cdot',0',1')$ be hyperfields. A function
\[
\sigma:H\tto H'
\]
is called a \emph{homomorphism of hyperfields} if $\sigma(0)=0'$, its restriction to the multiplicative groups is a homomorphism of groups and, in addition,
\begin{equation}\label{homohyp}
\sigma(x\boxplus y)\subseteq\sigma(x)\boxplus'\sigma(y)
\end{equation}
holds, for all $x,y\in H$.
\end{Definition}

Let us recall Krasner's factor construction which yields a projective system of hyperfields associated to any valued field (considered also in \cite{Lee20,LT22}). All hyperfields considered in this paper can be obtained via this construction (see \cite[Proposition 2.17]{Lin23}). Nevertheless, the statement \lq\lq all hyperfields are quotient\rq\rq\ is not valid in full generality \cite{Mas85}. Whether non-trivially valued hyperfields are automatically quotient or not is not known, as discussed in \cite{LT22}. 

For a valued field $(K,v)$ and an element $\gamma\in vK$ such that $\gamma\geq 0$, one considers the subgroup of the \emph{$1$-units of level $\gamma$} in $K^\times$:
\[
\mathcal{U}_v^\gamma:=\{u\in K\mid v(u-1)>\gamma\}.
\] 
It can be easily verified that $vu=0$ for all $u\in\mathcal{U}_v^\gamma$, so that the valuation map $v$ on $K$ factors through the quotient group $K^\times_\gamma:=K^\times/\mathcal{U}_v^\gamma$ and yields a map $v_\gamma:K_\gamma\tto vK\cup\{\infty\}$, where $K_\gamma:=K_\gamma^\times\cup\{[0]_\gamma\}$. In this paper, we follow the notation of \cite{Lee20} and we denote the multiplicative coset $x\mathcal{U}_v^\gamma$ of $x\in K$ in $K_\gamma$ as $[x]_\gamma$ (in particular, $[0]_\gamma=\{0\}$) and call the \emph{valued $\gamma$-hyperfield associated to} $(K,v)$ the hyperfield $(K_\gamma,\boxplus,\cdot,[0]_\gamma,[1]_\gamma)$, where 
\[
[x]_\gamma\boxplus [y]_\gamma:=\{[x+yu]_\gamma\mid u\in\mathcal{U}_v^\gamma \}\quad\text{ and }\quad [x]_\gamma\cdot [y]_\gamma:=[xy]_\gamma~.
\]
The use of the term \lq\lq valued\rq\rq\ is motivated once one observes that the map $v_\gamma$ satisfies (VAL1), (VAL2) and the following property analogous to (VAL3):
\begin{enumerate}
\item[(VAL3*)] $v_\gamma[z]_\gamma\geq\min\{v_\gamma[x]_\gamma,v[x]_\gamma\}$, for all $[x]_\gamma~, [y]_\gamma\in K_\gamma$ and all $[z]_\gamma\in [x]_\gamma\boxplus [y]_\gamma$
\end{enumerate} 
see also \cite{KLS22,PhD22,Lin23}. More generally, we shall call $(H,v)$ a \emph{valued hyperfield} whenever $v$ is a map from the hyperfield $H$ to an ordered abelian group $\Gamma$ (with the addition of $\infty$) satisfying (VAL1), (VAL2) and (VAL3*). These requirements are equivalent (cf.\ e.g.\ \cite[Lemma 3.4]{Lin23}) to $v$ being a homomorphism of hyperfields $H\tto\mathcal{T}(\Gamma)$, where $\mathcal{T}(\Gamma)$ denotes the \emph{generalised tropical hyperfield} associated to $\Gamma$:

\begin{Example}[Example 2.14 in \cite{Lin23}]\label{TGamma}
Let $\infty$ be a symbol such that $\gamma+\infty=\infty+\gamma=\infty>\gamma$ for all $\gamma\in\Gamma$. For $\gamma,\delta\in\Gamma\cup\{\infty\}$ we denote by $[\gamma,\delta]$ the closed interval containing all $\varepsilon\in\Gamma\cup\{\infty\}$ satisfying $\gamma\leq \varepsilon\leq \delta$. Then by setting $\gamma\boxplus \infty=\infty\boxplus \gamma=\{\gamma\}$ for all $\gamma\in\mathcal{T}(\Gamma)$ and:
\[
\gamma\boxplus \delta:=\begin{cases}\{\min\{\gamma,\delta\}\}&\text{if }\gamma\neq \delta,\\ [\gamma,\infty]&\text{if }\gamma=\delta.\end{cases}
\quad\quad(\gamma,\delta\in\mathcal{T}(\Gamma))
\]
It is not difficult to check that $(\mathcal{T}(\Gamma),\boxplus,+,\infty,0)$ is a hyperfield, called \emph{generalised tropical hyperfield associated to $\Gamma$}.

Note that, conversely, the order of $\Gamma$ can be recovered as follows:
\[
\gamma\leq\delta\iff \delta\in\gamma\boxplus\gamma.
\]
\end{Example}

In \cite[Section 3]{Lin23} the author shows that, as in the case of fields, the set $\mathcal{O}_v$ of the elements in a valued hyperfield $(H,v)$ with non-negative value under $v$ determines the valuation map (up to valuation-equivalence). As a consequence, homomorphisms of valued hyperfields are defined analogously to arrows in $\vFld$ and a category $\vHyp$ is thus obtained. A field $K$ with additive operation $+$ can be regarded as a hyperfield with additive operation $\boxplus$ defined as $x\boxplus y:=\{x+y\}$. Conversely, any hyperfield with a \emph{singlevalued} additive operation, i.e., such that $x\boxplus y$ is a singleton for all $x,y\in H$, can be regarded as a field. It is in this spirit that we view $\vFld$ as a subcategory of $\vHyp$. One can observe that the identity $1_{\mathcal{T}(\Gamma)}:\mathcal{T}(\Gamma)\tto \mathcal{T}(\Gamma)$ of the hyperfield $\mathcal{T}(\Gamma)$ in $\vHyp$ is a valuation on $\mathcal{T}(\Gamma)$. Thus, a valuation $v:H\tto \Gamma\cup\{\infty\}$ on a hyperfield $H$ is, equivalently, a $\vHyp$-arrow $(H,v)\tto(\mathcal{T}(\Gamma),1_{\mathcal{T}(\Gamma)})$.

Let us fix a valued field $(K,v)$. The following statement contains a number of fundamental properties of the valued $\gamma$-hyperfields associated to $(K,v)$, where $0\leq\gamma\in vK$. 

\begin{Lemma}[Lee Lemma]\label{Lee}
Take $x,y,x_0,\ldots,x_k\in K$ for some positive integer $k$ and let $\gamma\in vK$ be such that $\gamma\geq 0$. The following assertions hold:
\begin{enumerate}
\item[$(i)$] If $x\neq0$, then $[x]_\gamma=\{y\in K\mid v(x-y)> \gamma+vx\}$.
\item[$(ii)$] If $x$ and $y$ are not both $0$, then 
\[
\bigcup([x]_\gamma\boxplus[y]_\gamma)=\{z\in K\mid v(z-(x+y))>\gamma+\min\{vx,vy\}\}.
\]
\item[$(iii)$] If $x$ and $y$ are not both $0$, then 
\[
0\in \bigcup([x]_\gamma\boxplus[y]_\gamma)\iff\bigcup([x]_\gamma\boxplus [y]_\gamma)=\bigl\{z\in K\mid vz>\gamma+\min\{vx,vy\}\bigr\}.
\]
\item[$(iv)$] $[x_0+\ldots+x_k]_\gamma\in [x_0]_\gamma\boxplus\ldots\boxplus[x_k]_\gamma$ .
\item[$(v)$] If $x_0,\ldots,x_k\in\mathcal{O}_v$ are not all $0$, then
\[
[y]_\gamma\in [x_0]_\gamma\boxplus\ldots\boxplus[x_k]_\gamma\quad\implies\quad v(x_0+\ldots+x_k-y)>\gamma.
\]
\end{enumerate}
\end{Lemma}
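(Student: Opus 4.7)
The plan is to treat the five assertions in turn: (i) unravels the definition, (ii) is the central ultrametric computation, (iii) is a direct corollary of (ii), and (iv) and (v) are inductions on $k$.

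For (i), I would unfold $[x]_\gamma=\{xu\mid u\in\mathcal{U}_v^\gamma\}$ and, using that $x\neq 0$ makes $u=y/x$ well-defined, rewrite $v(u-1)>\gamma$ as $v(y-x)>\gamma+vx$ via (VAL2). For (ii), the forward direction picks a representative $z\in[x+yu]_\gamma$; the subcase $x+yu=0$ forces $z=0$ and $x+y=y(1-u)$, so $v(z-(x+y))=vy+v(1-u)>\gamma+\min\{vx,vy\}$; otherwise (i) and (VAL3) combine the bounds $v(z-(x+yu))>\gamma+v(x+yu)\geq\gamma+\min\{vx,vy\}$ and $v((x+yu)-(x+y))=vy+v(u-1)>\gamma+\min\{vx,vy\}$ to yield the claim. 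For the reverse direction, I would split on whether $vx=vy$: if one value is strictly smaller, then $v(x+y)=\min\{vx,vy\}$ and $u=1$ suffices via (i); if $vx=vy$ with $y\neq 0$, the explicit choice $u:=1+(z-(x+y))/y$ lies in $\mathcal{U}_v^\gamma$ (since $v(u-1)=v(z-(x+y))-vy>\gamma$) and satisfies $x+yu=z$, so that $z\in[x+yu]_\gamma$ trivially. The remaining corner $y=0$ reduces to (i) directly.

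Assertion (iii) then follows from (ii): the hypothesis $0\in\bigcup([x]_\gamma\boxplus[y]_\gamma)$ translates, via the description given by (ii), into $v(x+y)>\gamma+\min\{vx,vy\}$, and (VAL3) then equates the two sets in the statement. For (iv), I would induct on $k$: the base $k=1$ picks $u=1$ in the definition of $\boxplus$, while the inductive step uses associativity of $\boxplus$ together with the monotonicity $A\subseteq B\Rightarrow A\boxplus C\subseteq B\boxplus C$. For (v), again by induction on $k$, the base case writes $y=(x_0+x_1u)u'$ for some $u,u'\in\mathcal{U}_v^\gamma$ and uses the identity $y-(x_0+x_1)=(x_0+x_1u)(u'-1)+x_1(u-1)$, whose two summands have $v$-value strictly greater than $\gamma$ because $x_0,x_1\in\mathcal{O}_v$ and $v(u-1),v(u'-1)>\gamma$ (the edge case $x_0+x_1u=0$ forces $y=0$ and is handled by the identity $x_0+x_1=x_1(1-u)$); the inductive step factors the iterated hypersum as $([x_0]_\gamma\boxplus\ldots\boxplus[x_k]_\gamma)\boxplus[x_{k+1}]_\gamma$, combines the inductive hypothesis with the base case through (VAL3), and exploits that every representative of a class in an iterated hypersum of $\mathcal{O}_v$-classes still belongs to $\mathcal{O}_v$ (since $\mathcal{U}_v^\gamma\subseteq\mathcal{O}_v^\times$ keeps $v$ constant on each $\gamma$-coset).

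The main technical obstacle I expect is the reverse direction of (ii) in the equi-valuation case $vx=vy$: the naive choice $u=1$ can fail because $v(x+y)$ may then strictly exceed $\min\{vx,vy\}$, and one has to produce the corrective unit $u=1+(z-(x+y))/y$ by hand and verify $v(u-1)>\gamma$. Once (ii) is secured, the remaining clauses reduce to routine bookkeeping around (i), (ii), (iv) and (VAL3).
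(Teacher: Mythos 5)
Your proposal is correct. Note, however, that the paper does not actually prove this lemma: its ``proof'' is a pointer to \cite[Lemma 3.1]{Lee20} and \cite[Lemma 3.3]{PhD22}, so there is no in-paper argument to compare yours against. Judged on its own terms, your outline is sound and matches the standard verification. The two places where care is genuinely needed are exactly the ones you flag: the reverse inclusion in $(ii)$ when $vx=vy$, where the corrective unit $u=1+(z-(x+y))/y$ is the right device (and $v(u-1)=v(z-(x+y))-vy>\gamma$ does follow from the hypothesis since $\min\{vx,vy\}=vy$ there), and the zero corner cases $x+yu=0$ in $(ii)$ and $(v)$, which you handle correctly via $x+y=y(1-u)$ together with $vx=vy$ in that situation. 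For $(v)$, your observation that every representative of a class occurring in an iterated hypersum of $\mathcal{O}_v$-classes again lies in $\mathcal{O}_v$ (because $\mathcal{U}_v^\gamma\subseteq\mathcal{O}_v^\times$ keeps the value constant on each coset and $(VAL3)$ keeps values non-negative) is precisely what makes the induction close. The only cosmetic caveat is the asymmetry of the definition $[x]_\gamma\boxplus[y]_\gamma=\{[x+yu]_\gamma\mid u\in\mathcal{U}_v^\gamma\}$: your case split in $(ii)$ divides by $y$, so one should make sure the degenerate subcases ($x=0$ or $y=0$) are dispatched first, as you do.
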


\begin{proof}
See \cite[Lemma 3.1]{Lee20} or \cite[Lemma 3.3]{PhD22}.
\end{proof}

From the above fundamental properties we now wish to isolate the following important consequences.

\begin{Proposition}\label{single}
Take $[x]_\gamma~,[y]_\gamma\in K_\gamma$, where $( K_\gamma,v_\gamma)$ denotes the valued $\gamma$-hyperfield  of $(K,v)$ for some $\gamma\in vK$ such that $\gamma\geq 0$. Then all elements of $[x]_\gamma\boxplus[y]_\gamma$ have the same value under $v_\gamma$, unless $[0]_\gamma\in[x]_\gamma\boxplus[y]_\gamma$.
\end{Proposition}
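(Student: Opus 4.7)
The plan is to reduce to a statement about elements of $K$ via the union $\bigcup([x]_\gamma\boxplus[y]_\gamma)$ and then apply parts $(ii)$ and $(iii)$ of the Lee Lemma. First, dispose of the degenerate case $x=y=0$: here $[x]_\gamma\boxplus[y]_\gamma=\{[0]_\gamma\}$, so $[0]_\gamma$ lies in the sum and the claim holds vacuously. Assume therefore that $x,y$ are not both zero and that $[0]_\gamma\notin[x]_\gamma\boxplus[y]_\gamma$. Since $[0]_\gamma=\{0\}$, this is precisely the statement that $0\notin\bigcup([x]_\gamma\boxplus[y]_\gamma)$.

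Next, I would invoke Lee Lemma $(ii)$, which gives
\[
\bigcup([x]_\gamma\boxplus[y]_\gamma)=\{z\in K\mid v(z-(x+y))>\gamma+\min\{vx,vy\}\}.
\]
Taking $z=0$ in this description shows that $0\in\bigcup([x]_\gamma\boxplus[y]_\gamma)$ if and only if $v(x+y)>\gamma+\min\{vx,vy\}$; this is essentially the content of part $(iii)$ applied to our situation. The hypothesis $0\notin\bigcup([x]_\gamma\boxplus[y]_\gamma)$ therefore forces the inequality $v(x+y)\leq\gamma+\min\{vx,vy\}$.

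Now for any $z\in\bigcup([x]_\gamma\boxplus[y]_\gamma)$, the description above yields $v(z-(x+y))>\gamma+\min\{vx,vy\}\geq v(x+y)$. Writing $z=(z-(x+y))+(x+y)$ and applying the standard ultrametric principle (equality in (VAL3) when one term strictly dominates the other), I conclude $vz=v(x+y)$. Since $v_\gamma[z]_\gamma=vz$ for any representative $z$, every element of $[x]_\gamma\boxplus[y]_\gamma$ has value $v(x+y)$ under $v_\gamma$, which is what we want.

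There isn't really a hard step here: the entire argument is a bookkeeping exercise that leverages the explicit description of $\bigcup([x]_\gamma\boxplus[y]_\gamma)$ supplied by the Lee Lemma. The only point requiring mild care is to ensure that the translation between \emph{$[0]_\gamma$ belongs to the hypersum} and \emph{$0$ belongs to its union} is made correctly, together with the case distinction on whether $x$ and $y$ both vanish, so that the hypotheses of parts $(ii)$ and $(iii)$ of the Lee Lemma are actually satisfied.
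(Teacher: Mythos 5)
Your argument is correct. The paper itself offers no proof of this proposition, deferring instead to Krasner's original article and to \cite[Proposition 3.19]{PhD22}, so there is no internal argument to compare against; what you have written is a clean, self-contained derivation from Lemma \ref{Lee}, which is exactly the toolkit the paper makes available. The key steps all check out: the equivalence between $[0]_\gamma\in[x]_\gamma\boxplus[y]_\gamma$ and $0\in\bigcup([x]_\gamma\boxplus[y]_\gamma)$ is legitimate because the classes $[z]_\gamma$ partition $K$ and $[0]_\gamma=\{0\}$; setting $z=0$ in the description from part $(ii)$ correctly converts the hypothesis into the inequality $v(x+y)\leq\gamma+\min\{vx,vy\}$; and the ultrametric equality $vz=v(x+y)$ for every $z$ in the union follows since $v(z-(x+y))$ strictly exceeds $v(x+y)$. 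Your preliminary case split ($x=y=0$ handled trivially, the remaining cases satisfying the hypotheses of Lemma \ref{Lee} $(ii)$) is exactly the care needed to apply that lemma. The only cosmetic remark is that you do not really need to invoke part $(iii)$ at all --- your direct substitution $z=0$ into the formula of part $(ii)$ already does the work --- but that does not affect correctness.
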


\begin{proof}
See \cite[\S\ 3]{Kra57} or \cite[Proposition 3.19]{PhD22}.
\end{proof}

The above proposition permits to induce from $v_\gamma$ an ultrametric distance on $K_\gamma$ which we denote by $d_\gamma$ (see e.g.\ \cite[Definition 4.1]{Lin23}).

\begin{Proposition}\label{balls}
Let $( K_\gamma,v_\gamma)$ be the valued $\gamma$-hyperfield of a valued field $(K,v)$, where $0\leq \gamma\in vK$. If $x$ and $y$ are not both $0$, then $[x]_\gamma\boxplus[y]_\gamma$ is the open ultrametric ball of radius $\delta:=\gamma+\min\{vx,vy\}$ around $[x+y]_\gamma\in K_\gamma$ with respect to $d_\gamma$.
\end{Proposition}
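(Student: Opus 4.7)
The plan is to show that $[x]_\gamma\boxplus[y]_\gamma$ and the prescribed open ball around $[x+y]_\gamma$ both equal, as subsets of $K_\gamma$, the explicit set
\[
\{[z]_\gamma\in K_\gamma : v(z-(x+y))>\delta\}.
\]
All the ingredients needed are available in Lemma~\ref{Lee} and Proposition~\ref{single}; the argument is really one of matching two descriptions.

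For the hypersum side, I would apply Lemma~\ref{Lee}(ii) to obtain the equality $\bigcup([x]_\gamma\boxplus[y]_\gamma)=\{z\in K : v(z-(x+y))>\delta\}$ at the level of subsets of $K$. Since the left-hand side is automatically saturated under multiplication by $\mathcal{U}_v^\gamma$, it is enough to see the same for the right-hand side: for $u\in\mathcal{U}_v^\gamma$ and $z$ satisfying the inequality, one has $zu-(x+y)=(z-(x+y))+z(u-1)$, and a brief ultrametric check (noting that $\gamma\geq 0$ forces $vz\geq\min\{vx,vy\}$ via $z=(x+y)+(z-(x+y))$) delivers $v(zu-(x+y))>\delta$ as well. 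The equality then descends to $K_\gamma$ to give the coset-level description.

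For the ball side, I would unpack the definition of $d_\gamma$ inherited from $v_\gamma$. Whenever $[a]_\gamma\neq[b]_\gamma$, Lemma~\ref{Lee}(iv) places $[a-b]_\gamma$ inside the hyperdifference $[a]_\gamma\boxplus[-b]_\gamma$, and Proposition~\ref{single} guarantees that every element of this hyperdifference shares the same $v_\gamma$-value, namely $v(a-b)$; this is exactly what makes $d_\gamma$ well-defined. Consequently, the open ball of radius $\delta$ around $[x+y]_\gamma$ consists precisely of those $[z]_\gamma\in K_\gamma$ with $v(z-(x+y))>\delta$. The center $[x+y]_\gamma$ itself lies in this set: either $x+y=0$, in which case $z=0$ and the condition reads $\infty>\delta$, or $x+y\neq 0$, in which case Lemma~\ref{Lee}(i) applied to any $z\in[x+y]_\gamma$ gives $v(z-(x+y))>\gamma+v(x+y)\geq\delta$.

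Matching the two descriptions closes the proof. The only point requiring any care is the conventional match between \lq\lq open ball of radius $\delta$\rq\rq\ --- an ultrametric notion taking values in the ordered abelian group $vK$ --- and the strict inequality $v(z-(x+y))>\delta$ at the level of representatives in $K$; once this is pinned down, what remains is routine bookkeeping on top of Lee's Lemma.
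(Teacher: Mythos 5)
Your argument is correct, but note that the paper does not actually prove Proposition~\ref{balls}: it simply cites Krasner's original article and the author's thesis. Your proposal is therefore a genuine addition --- a self-contained derivation from the ingredients the paper does state, namely Lemma~\ref{Lee} and Proposition~\ref{single}. The two halves are sound: Lemma~\ref{Lee}$(ii)$ identifies $\bigcup([x]_\gamma\boxplus[y]_\gamma)$ with $\{z\in K\mid v(z-(x+y))>\delta\}$, and your reconstruction of $d_\gamma$ (via Lemma~\ref{Lee}$(iv)$ and Proposition~\ref{single}, so that $d_\gamma([a]_\gamma,[b]_\gamma)$ is the common value $v(a-b)$ whenever $[a]_\gamma\neq[b]_\gamma$) identifies the open ball with the same set of cosets; the check that the centre lies in the ball via Lemma~\ref{Lee}$(i)$ and $\gamma+v(x+y)\geq\delta$ is also right. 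Two small remarks. First, your verification that $\{z\mid v(z-(x+y))>\delta\}$ is saturated under $\mathcal{U}_v^\gamma$ is logically redundant: Lemma~\ref{Lee}$(ii)$ already asserts that this set \emph{equals} a union of $\mathcal{U}_v^\gamma$-cosets, so saturation is automatic (though the explicit ultrametric computation does no harm and makes the descent to $K_\gamma$ transparent). Second, the precise definition of $d_\gamma$ is deferred by the paper to an external reference, so your proof implicitly fixes the standard convention (larger value means smaller distance, open ball means strict inequality); you flag this yourself, and it is the convention consistent with the rest of the paper, e.g.\ with how Proposition~\ref{balls} is used in the proof of Lemma~\ref{FVhyp}.
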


\begin{proof}
See \cite[\S\ 3]{Kra57} or \cite[Proposition 3.26]{PhD22}.
\end{proof}

\section{Main results}

For the rest of the paper:
\begin{itemize}
\item $(K,v)$ denotes a valued field.
\item $(\Gamma,\leq,+,0)$ denotes an ordered abelian group containing $(vK,\leq,+,0)$ as a substructure in the language $\{\leq,+,0\}$ of ordered groups.
\end{itemize}

In the first result of this final section, we highlight another consequence of the fact that a valued field lies dense in its completion. As usual, if a homomorphism of hyperfields $\sigma$ is bijective and its inverse is a homomorphism of hyperfields as well, then $\sigma$ is called an \emph{isomorphism of hyperfields}.

\begin{Lemma}\label{complet}
Let $(K^c,v^c)$ be the completion of $(K,v)$ and identify $K$ and $vK$ with the subsets of $K^c$ and $v^cK^c$ to which they are canonically isomorphic. Then for all $\gamma\in vK$ such that $\gamma\geq 0$, there is an isomorphism of hyperfields $\sigma_\gamma:K_\gamma\tto K^c_\gamma$ such that $v_\gamma(\sigma_\gamma(a))=v^c_\gamma(a)$ holds, for all $a\in K^c_\gamma$.
\end{Lemma}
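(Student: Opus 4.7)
The natural candidate is the map $\sigma_\gamma\colon K_\gamma\tto K^c_\gamma$ induced by the inclusion $K\hookrightarrow K^c$, sending $[x]_\gamma\mapsto [x]^c_\gamma$ for $x\in K$, where I write $[\,\cdot\,]^c_\gamma$ for the cosets in $K^c_\gamma$. My first task would be to verify that this is well-defined, injective and multiplicative. Since $v^c$ restricts to $v$ on $K$, the identity $\mathcal{U}_v^\gamma=K\cap \mathcal{U}_{v^c}^\gamma$ holds, which means that two elements of $K$ represent the same coset in $K_\gamma$ if and only if they represent the same coset in $K^c_\gamma$; the compatibility with the valuations is then immediate and amounts to rewriting $v^c x=vx$ modulo the quotients.

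The actual content of the lemma is concentrated in two points: bijectivity of $\sigma_\gamma$ onto $K^c_\gamma$, and equality (rather than mere inclusion) of hypersums. For bijectivity I would use density of $K$ in $K^c$ together with the identification $v^cK^c=vK$ recalled after Fact~\ref{completion}: given a non-zero $a\in K^c$ and $\beta:=v^c a\in vK$, density yields $x\in K$ with $v^c(a-x)>\gamma+\beta$; the ultrametric inequality then forces $v^c x=\beta$ and $v^c(x/a-1)>\gamma$, so $\sigma_\gamma([x]_\gamma)=[a]^c_\gamma$. The zero coset is trivial.

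For the hypersum, the cleanest route is via Proposition~\ref{balls}: in both $(K_\gamma,d_\gamma)$ and $(K^c_\gamma,d^c_\gamma)$, the hypersums $[x]_\gamma\boxplus[y]_\gamma$ and $[x]^c_\gamma\boxplus^c[y]^c_\gamma$ are the open ultrametric balls of the common radius $\gamma+\min\{vx,vy\}$ centred at $[x+y]_\gamma$ and $[x+y]^c_\gamma$ respectively. Since $d_\gamma$ is induced from $v_\gamma$ via Proposition~\ref{single}, the compatibility of $\sigma_\gamma$ with the valuations automatically makes it an isometry; combined with the bijectivity from the previous step, this forces $\sigma_\gamma$ to carry the first ball onto the second, which is precisely the desired equality of hypersums. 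As an alternative, one could argue directly from Lemma~\ref{Lee}(ii) by a density argument inside the bigger ball in $K^c$, handling the case where the $0$-coset lies in the hypersum separately via Lemma~\ref{Lee}(iii).

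The main obstacle I anticipate is checking the reverse inclusion of hypersums; this is the point at which both density of $K$ in $K^c$ and the equality $v^cK^c=vK$ are used in a serious way. After these are in hand, multiplicativity and preservation of $0$ and $1$ are immediate, the set-theoretic inverse of $\sigma_\gamma$ inherits the hyperfield-homomorphism properties by symmetry, and $\sigma_\gamma$ qualifies as an isomorphism of valued hyperfields.
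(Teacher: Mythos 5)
Your proposal is correct and follows essentially the same route as the paper: density of $K$ in $K^c$ together with the fact that the cosets are open ultrametric balls (Lemma \ref{Lee}$(i)$) gives well-definedness, injectivity and surjectivity of $[x]_\gamma\mapsto[x]^c_\gamma$, and compatibility of the quotient data over $K$ and $K^c$ gives the rest. The only difference is one of detail: where the paper asserts that the isomorphism property \lq\lq easily follows from the definitions and the inclusion $\mathcal{U}_v^\gamma\subseteq\mathcal{U}_{v^c}^\gamma$\rq\rq, you supply an explicit verification of the equality of hypersums via Proposition \ref{balls} and the isometry property of $\sigma_\gamma$, which is a legitimate way to fill in that step.
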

\begin{proof}
Fix $\gamma\in vK$ such that $\gamma\geq0$. Just for this proof, we will denote by $[x]^c$ the class of $x\in K^c$ in $K^c_\gamma$ and by $[y]$ the class of $y\in K$ in $K_\gamma$. It follows from Lemma \ref{Lee} $(i)$ that, for all nonzero $x\in K^c$, we have that $[x]^c$, as a subset of $K^c$, is an open ultrametric ball (with respect to the ultrametric induced on $K^c$ by $v^c$). Since $K$ lies dense in $K^c$, there is $y\in K$ such that $y\in[x]^c$. On the other hand, $[x]^c$ is an equivalence class in $K^c$ with respect to an equivalence relation whose restriction to $K$ has $[y]$ among its equivalence classes. Consequently, $[y]^c= [x]^c$ as subsets of $K^c$ and if $y'\in K$ satisfies $y'\in[x]^c$ as well, then $[y]^c= [x]^c=[y']^c$ and thus $[y]=[y']$ must hold. Since all $x\in K$ belong to the class $[x]^c$ in $K^c_\gamma$, this proves that the assignment $[x]\mapsto [x]^c$ defines a bijective function $\sigma_\gamma:K_\gamma\tto K^c_\gamma$. From the definitions and the inclusion $\mathcal{U}_v^\gamma\subseteq\mathcal{U}_{v^c}^\gamma$ it easily follows at this point that $\sigma_\gamma$ is an isomorphism of hyperfields satisfying the assertion of the lemma.
\end{proof}

The next result is that the valued $\gamma$-hyperfields of $(K,v)$, form a diagram:
\begin{equation}\label{diagramvK}
\left(\begin{tikzcd}[column sep=huge, row sep=small]
K_\delta\arrow[dd,"\rho_{\delta,\gamma}"']\arrow[dr,"v_\delta"]&\\
&\mathcal{T}(\Gamma)\\
K_\gamma\arrow[ur,"v_\gamma"']&
\end{tikzcd}\right)_{\delta\geq\gamma\geq0}
\end{equation}
(of shape the poset-category associated to $vK_{\geq 0}$) in the slice category $\vHyp/\mathcal{T}(\Gamma)$.

\begin{Lemma}
If $\gamma,\delta\in vK\subseteq\Gamma$ satisfy $0\leq\gamma\leq\delta$, then the functions
\[
\begin{tikzcd}[column sep=huge,row sep=tiny]
\rho_{\delta,\gamma}:K_\delta\arrow[r]&K_\gamma\\
{[}x{]}_\delta\arrow[r,mapsto]&{[}x{]}_\gamma
\end{tikzcd}
\]
are arrows in the slice category $\vHyp/\mathcal{T}(\Gamma)$. Furthermore, if $\gamma\leq\delta\leq\varepsilon$ are non-negative elements of $vK$, then
\[
\rho_{\varepsilon,\gamma}=\rho_{\delta,\gamma}\circ\rho_{\varepsilon,\delta}~.
\]
\end{Lemma}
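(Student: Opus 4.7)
The approach is to verify the statement as a sequence of routine checks all flowing from a single observation: whenever $0 \leq \gamma \leq \delta$, the unit groups satisfy $\mathcal{U}_v^\delta \subseteq \mathcal{U}_v^\gamma$, since the condition $v(u-1) > \delta$ is strictly stronger than $v(u-1) > \gamma$. This inclusion is what drives every part of the proof.

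First I would establish well-definedness: if $[x]_\delta = [y]_\delta$ with $x,y \neq 0$, then $xy^{-1} \in \mathcal{U}_v^\delta \subseteq \mathcal{U}_v^\gamma$, so $[x]_\gamma = [y]_\gamma$, while the zero class is sent to the zero class by convention. Next I would verify the hyperfield homomorphism axioms: preservation of $0$ and $1$ is immediate, multiplicativity reduces to $[xy]_\delta \mapsto [xy]_\gamma = [x]_\gamma \cdot [y]_\gamma$, and for the hyperaddition one unravels
\[
\rho_{\delta,\gamma}([x]_\delta \boxplus [y]_\delta) = \{[x + yu]_\gamma : u \in \mathcal{U}_v^\delta\} \subseteq \{[x + yu']_\gamma : u' \in \mathcal{U}_v^\gamma\} = [x]_\gamma \boxplus [y]_\gamma,
\]
the inclusion once more coming from $\mathcal{U}_v^\delta \subseteq \mathcal{U}_v^\gamma$. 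To promote $\rho_{\delta,\gamma}$ to a morphism in the slice category $\vHyp/\mathcal{T}(\Gamma)$, I would then check the commuting triangle $v_\gamma \circ \rho_{\delta,\gamma} = v_\delta$, which is transparent since both sides send $[x]_\delta$ to $v(x)$ (and $[0]_\delta$ to $\infty$). Finally, the composition identity $\rho_{\varepsilon,\gamma} = \rho_{\delta,\gamma} \circ \rho_{\varepsilon,\delta}$ follows class by class from $[x]_\varepsilon \mapsto [x]_\delta \mapsto [x]_\gamma$.

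No step presents a real obstacle; the content of the lemma is, in effect, the functoriality of Krasner's quotient construction along the tower $\mathcal{U}_v^\varepsilon \subseteq \mathcal{U}_v^\delta \subseteq \mathcal{U}_v^\gamma$. The one spot where I would be slightly careful is the hyperaddition axiom, where the inclusion cannot in general be strengthened to equality: the image of $[x]_\delta \boxplus [y]_\delta$ is indexed only by $u$ ranging over the smaller set $\mathcal{U}_v^\delta$, which is precisely why the definition of hyperfield homomorphism requires only $\subseteq$.
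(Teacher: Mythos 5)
Your proposal is correct and follows essentially the same route as the paper's proof: the inclusion $\mathcal{U}_v^\delta\subseteq\mathcal{U}_v^\gamma$ drives the well-definedness, the multiplicative and hyperadditive homomorphism checks (with the same one-sided inclusion for $\boxplus$), the commutation $v_\gamma\circ\rho_{\delta,\gamma}=v_\delta$ via $[x]_\delta\mapsto vx$, and the composition identity read off class by class. Nothing is missing.
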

\begin{proof}
First we show that $\rho_{\delta,\gamma}$ is well-defined for all $\gamma,\delta$ as in the statement. To this end, assume that $[x]_\delta=[y]_\delta$~. Then there exist $t\in\mathcal{U}_v^\delta$ such that $x=yt$. Since $\gamma\leq\delta$ we have that $\mathcal{U}_v^\delta\subseteq\mathcal{U}_v^\gamma$, so we obtain that $x=yt$ for some $t\in\mathcal{U}_v^\gamma$ and thus
\[
\rho_{\delta,\gamma}([x]_\delta)=[x]_\gamma=[y]_\gamma=\rho_{\delta,\gamma}([y]_\delta).
\]
It is clear that $\rho_{\delta,\gamma}[0]_\delta=[0]_\gamma$ holds. Furthermore, the following computation:
\[
\rho_{\delta,\gamma}([x]_\delta[y]^{-1}_\delta)=\rho_{\delta,\gamma}([xy^{-1}]_\delta)=[xy^{-1}]_\gamma=[x]_\gamma[y]^{-1}_\gamma=\rho_{\delta,\gamma}([x]_\delta)\rho_{\delta,\gamma}([y]_\delta)^{-1},
\]
for all $x,y\in K$ with $y\neq 0$, shows that the restriction of $\rho_{\delta,\gamma}$ to $K_\delta^\times$ is a homomorphism of groups (with codomain $K_\gamma^\times$). In addition, we have that
\begin{align*}
\rho_{\delta,\gamma}([x]_\delta\boxplus_\delta [y]_\delta)&=\{\rho_{\delta,\gamma}([x+yt]_\delta)\mid t\in\mathcal{U}_v^\delta\}\\
&=\{[x+yt]_\gamma\mid t\in\mathcal{U}_v^\delta\}\\
&\subseteq\{[x+yt]_\gamma\mid t\in\mathcal{U}_v^\gamma\}\\
&=[x]_\gamma\boxplus_\gamma [y]_\gamma\\
&=\rho_{\delta,\gamma}([x]_\delta)\boxplus_\gamma\rho_{\delta,\gamma}([y]_\delta).
\end{align*}
where $\boxplus_\delta$ and $\boxplus_\gamma$ denote the additive operation of $K_\delta$ and $K_\gamma$, respectively and we used again the fact that $\mathcal{U}_v^\delta\subseteq\mathcal{U}_v^\gamma$. We have proved that $\rho_{\delta,\gamma}$ is a homomorphism of hyperfields.

We deduce that $\rho_{\delta,\gamma}$ is an arrow in $\vHyp/\mathcal{T}(\Gamma)$ by noticing that the following chain of equalities: 
\[
v_\delta[x]_\delta=vx=v_\gamma[x]_\gamma=v_\gamma(\rho_{\delta,\gamma}[x]_\delta)
\]
holds, for all $[x]_\delta\in K_\delta$, by the definition of the valuations $v_\gamma$ and $v_\delta$. The last assertion of the lemma follows immediately from the definition of the functions $\rho_{\delta,\gamma}$ ($\delta\geq\gamma\geq0$).
\end{proof}

The assignment $x\mapsto[x]_\gamma$ defines a function $\rho_\gamma: K\tto K_\gamma$, for all non-negative $\gamma\in vK$. It follows from the definitions that these functions are homomorphisms of valued hyperfields such that $vx=v_\gamma[x]_\gamma$ for all $x\in K$, i.e., the following triangular diagrams:
\[
\left(\begin{tikzcd}[column sep=huge, row sep=small]
K\arrow[dd,"\rho_\gamma"']\arrow[dr,"v"]&\\
&\mathcal{T}(\Gamma)\\
K_\gamma\arrow[ur,"v_\gamma"']&
\end{tikzcd}\right)_{\gamma\geq0}
\]
commute in $\vHyp$. Therefore, the functions $\rho_\gamma$ are arrows in the slice category $\vHyp/\mathcal{T}(\Gamma)$. Moreover, they respect the functions $\rho_{\delta,\gamma}$ in the sense that, for all non-negative $\gamma,\delta\in vK$ we have that the following diagram:
\[
\begin{tikzcd}[column sep=huge, row sep=small]
K_\delta\arrow[dd,"\rho_{\delta,\gamma}"]&\\
&K\arrow[ul,"\rho_\gamma"']\arrow[dl,"\rho_\delta"]\\
K_\gamma&
\end{tikzcd}
\]
commutes in $\vHyp$. The above discussion shows that $(K,v)$ is the vertex of a cone over the diagram \eqref{diagramvK} in $\vHyp/\mathcal{T}(\Gamma)$, i.e., the following diagram:
\[
\begin{tikzcd}[column sep=huge, row sep=small]
K_\delta\arrow[dd,"\rho_{\delta,\gamma}"]\arrow[rrd,bend left,"v_\delta"]&&\\
&K\arrow[ul,"\rho_\gamma"']\arrow[dl,"\rho_\delta"]\arrow[r,"v"]&\mathcal{T}(\Gamma)\\
K_\gamma\arrow[rru,bend right,"v_\gamma"]&&
\end{tikzcd}
\]
is commutative in $\vHyp$. Now consider the completion $(K^c,v^c)$ of $(K,v)$. If, as before, we identify $vK$ with the subset of $v^cK^c$ to which it is canonically isomorphic, then from Lemma \ref{complet} we deduce that $K^c$ too is the vertex of a cone over the same diagram \eqref{diagramvK}. In addition, $K$ embeds as a valued field in $K^c$ by Fact \ref{completion} and such an embedding can be seen to be an arrow in $\vHyp/\mathcal{T}(\Gamma)$. Before moving forward let us prove the following useful lemma, which states that (with the right choice of $\Gamma$) all cones in $\vHyp/\mathcal{T}(\Gamma)$ over the diagram \eqref{diagramvK} are (valued) fields.

\begin{Lemma}\label{FVhyp}
Let $(H,w)$ be a valued hyperfield such that there are order-preserving group-embeddings $vK\hookrightarrow wH\hookrightarrow \Gamma$ and assume that $(H,w)$ is the vertex of a cone in $\vHyp/\mathcal{T}(\Gamma)$ over the diagram \eqref{diagramvK}. Then $H$ is a field, i.e., for all $x,y\in H$ we have that $x\boxplus y$ is a singleton, where $\boxplus$ denotes the additive operation of the hyperfield $H$.
\end{Lemma}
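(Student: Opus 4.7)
The plan is to fix $z_1,z_2\in x\boxplus y$ and show $z_1=z_2$, which, by uniqueness of additive inverses in a hyperfield, is equivalent to showing $0\in z_1\boxplus(-z_2)$. I will push the hypothetical configuration through each cone side $s_\gamma$ into $K_\gamma$, use Lee Lemma $(ii)$ to convert hypersum memberships into valuation inequalities in $K$, and let $\gamma$ grow to obtain a contradiction.

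First I reduce: if $x=0$ or $y=0$ then $x\boxplus y$ is already a singleton, so assume both are nonzero. The cone condition $v_\gamma\circ s_\gamma=w$, together with the image of $v_\gamma$ lying in $vK\cup\{\infty\}$, forces $wH\subseteq vK$; combined with the hypothesized embedding $vK\hookrightarrow wH$, this gives $wH=vK$. Assume for contradiction $z_1\neq z_2$; then $0\notin z_1\boxplus(-z_2)$, so every $a\in z_1\boxplus(-z_2)$ is nonzero and $\alpha:=w(a)\in vK$.

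The main computation is: for each $\gamma\in vK$ with $\gamma\geq 0$, apply $s_\gamma$ to the memberships $z_i\in x\boxplus y$ ($i=1,2$) and $a\in z_1\boxplus(-z_2)$, and pick representatives $x',y',z_1',z_2',a'\in K$ of the resulting images in $K_\gamma$. Since $s_\gamma$ preserves values, $vx'=w(x)$, $vy'=w(y)$, $vz_i'=w(z_i)$, and $va'=\alpha$. Lee Lemma $(ii)$ then yields
\[
v(z_i'-(x'+y'))>\gamma+\min\{w(x),w(y)\}\quad\text{and}\quad v(a'-(z_1'-z_2'))>\gamma+\min\{w(z_1),w(z_2)\};
\]
two applications of the ultrametric triangle inequality, combined with VAL3* in $H$ (which forces $w(z_i)\geq\min\{w(x),w(y)\}$), give $\alpha=v(a')>\gamma+\min\{w(x),w(y)\}$.

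Since this inequality must hold for every $\gamma\geq 0$ in $vK$, the cofinality of $vK_{\geq 0}$ in $vK$ lets us pick $\gamma$ large enough that $\gamma+\min\{w(x),w(y)\}\geq\alpha$, forcing $\alpha>\alpha$ and giving the desired contradiction; hence $z_1=z_2$, proving $H$ is a field. The main subtlety I anticipate is verifying that the representatives $x',y',z_i',a'\in K$ can be chosen so that Lee Lemma $(ii)$ applies uniformly to both hypersum memberships; this is immediate once one recalls that each class in $K_\gamma$ is a subset of $K$, so any element of the class serves as a representative.
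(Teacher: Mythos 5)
Your proof is correct and follows essentially the same route as the paper's: both reduce to showing $0\in z_1\boxplus(-z_2)$ (so that uniqueness of additive inverses gives $z_1=z_2$), push the configuration through the cone sides into the hyperfields $K_\gamma$, deduce that the relevant values exceed $\gamma+\min\{w(x),w(y)\}$, and conclude by cofinality of the non-negative part of $vK$. The only difference is cosmetic: you work with representatives in $K$ and invoke Lemma \ref{Lee}~$(ii)$ directly, whereas the paper routes the same estimate through the ultrametric balls of Propositions \ref{single} and \ref{balls}; your version is, if anything, more streamlined.
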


\begin{proof}
We work up to the given embeddings of ordered abelian groups. Fix $\gamma\in vK$ such that $\gamma\geq 0$. We denote by $f_\gamma:H\tto  K_\gamma$ the sides of the given cone in $\vHyp/\mathcal{T}(\Gamma)$. Pick $x,y\in H^\times$ and $z,z'\in x\boxplus y$. We claim that $0\in z'\boxplus z^-$, where $z^-$. Since $f_\gamma$ is an an arrow in the slice category $\vHyp/\mathcal{T}(\Gamma)$, we obtain that $f_\gamma(z),f_\gamma(z')\in f_\gamma(x)\boxplus_\gamma f_\gamma(y)$ holds in $ K_\gamma$, where $\boxplus_\gamma$ denotes the additive operation of the hyperfield $K_\gamma$. In addition, we obtain that the equalities $wx=v_\gamma f_\gamma(x)$ and $wy=v_\gamma f_\gamma(y)$ hold in $\Gamma$. Thus, by Proposition \ref{balls}, we have that $f_\gamma(x)\boxplus_\gamma f_\gamma(y)$ is an open ultrametric ball in $K_\gamma$ of radius $\gamma+\min\{wx,wy\}$. Let now $(\gamma_\nu)_{\nu<\kappa}$ be an increasing and cofinal sequence of non-negative elements of $vK$ and take an arbitrary $\delta\in vK$. We consider some $\nu<\kappa$ which is large enough so that $\gamma_\nu+\min\{wx,wy\}>\delta$ holds in $\Gamma$. If we suppose that $[0]_{\gamma_\nu}\notin f_{\gamma_\nu}(z')\boxplus_{\gamma_\nu} f_{\gamma_\nu}(z)^-$, then by Proposition \ref{single} and since $f_{\gamma_\nu}$ is a homomorphism of hyperfields, we obtain that for any $a\in z'-z^-$, the value $wa=v_{\gamma_\nu} f_{\gamma_\nu}(a)\in vK\cup\{\infty\}$ is larger than $\delta$. Since $\delta$ is arbitrary in $vK$, this implies that $wa=\infty$ and so $a=0$. We deduce that
\[
[0]_{\gamma_\nu}=f_{\gamma_\nu}(a)\in f_{\gamma_\nu}(z')\boxplus_{\gamma_\nu}f_{\gamma_\nu}(z)^-.
\]
This contradiction shows that $[0]_{\gamma_\nu}\in f_{\gamma_\nu}(z')\boxplus_{\gamma_\nu}f_{\gamma_\nu}(z)^-$ must hold in $K_{\gamma_\nu}$ and, as a consequence, we obtain that $f_{\gamma_\nu}(z')=f_{\gamma_\nu}(z)$. Furthermore, since $f_{\gamma_\nu}$ is an arrow in $\vHyp/\mathcal{T}(\Gamma)$, we have that $wz'=wz$ and by enlarging $\nu$ (if necessary) we can ensure that $\gamma_{\nu}+wz>\delta$ as well. Now, for all $a\in z'\boxplus z^-$ we obtain that $f_{\gamma_\nu}(a)\in f_{\gamma_\nu}(z')\boxplus_{\gamma_\nu} f_{\gamma_\nu}(z)^-$ and, again by Proposition \ref{balls}, $f_{\gamma_\nu}(z')\boxplus_{\gamma_\nu} f_{\gamma_\nu}(z)^-$ is an open ultrametric ball of radius $\gamma_\nu+wz$ and center $[0]_{\gamma_\nu}$. Therefore, $wa=v_{\gamma_\nu}f_{\gamma_\nu}(a)\in vK\cup\{\infty\}$ will be larger than $\delta$ and since $\delta$ is arbitrary in $vK$, it follows that $a=0$ anyway. At this point our claim is proved and $z'=z$ follows. The proof is complete.
\end{proof}

By the following theorem the valued field extensions of a valued field which embed in its completion are characterised in terms of the diagram \eqref{diagramvK}.

\begin{Theorem}\label{main}
Let $(L,w)$ be a valued field extension of $(K,v)$ such that there is an order-preserving group-embedding $wL\hookrightarrow\Gamma$. Then the following statements are equivalent: 
\begin{enumerate}
\item[$(i)$] $(L,w)$ embeds as a valued field into $(K^c,v^c)$.
\item[$(ii)$] For all $\gamma\in vK$ such that $\gamma\geq 0$, there is an isomorphism in $\vHyp/\mathcal{T}(\Gamma)$:
\[
\begin{tikzcd}
\sigma_\gamma:(L_\gamma,w_\gamma)\arrow[r]&(K_\gamma,v_\gamma).
\end{tikzcd}
\]
\end{enumerate}
\end{Theorem}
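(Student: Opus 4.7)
I would prove the two implications separately.

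For $(i) \Rightarrow (ii)$: assume $(L,w)$ embeds as a valued field into $(K^c,v^c)$, so that $K\subseteq L\subseteq K^c$. Since $K$ lies dense in $K^c$, it follows at once that $K$ lies dense in $L$ and $L$ lies dense in $K^c$, and by the uniqueness part of Fact \ref{completion} the pair $(K^c,v^c)$ is therefore also the completion of $(L,w)$; in particular $wL=vK$, so the set of non-negative $\gamma$ coincides for both valuations. I would then apply Lemma \ref{complet} to each of $(K,v)$ and $(L,w)$, obtaining for every $\gamma\geq 0$ in $vK$ isomorphisms $\tau^K_\gamma:K_\gamma\tto K^c_\gamma$ and $\tau^L_\gamma:L_\gamma\tto K^c_\gamma$ in $\vHyp/\mathcal{T}(\Gamma)$, and set $\sigma_\gamma:=(\tau^K_\gamma)^{-1}\circ\tau^L_\gamma$. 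The fact that both $\tau^K_\gamma$ and $\tau^L_\gamma$ commute with the valuations to $\mathcal{T}(\Gamma)$ immediately guarantees that $\sigma_\gamma$ is an arrow in $\vHyp/\mathcal{T}(\Gamma)$.

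For $(ii) \Rightarrow (i)$: given the isomorphisms $\sigma_\gamma$, my plan is to build a valued field embedding $\phi:L\tto K^c$ by approximation. For each $x\in L$ and each non-negative $\gamma\in vK$, pick a representative $y_{x,\gamma}\in K$ of $\sigma_\gamma([x]_\gamma)\in K_\gamma$ (writing $[x]_\gamma$ for the class in $L_\gamma$ and $[y_{x,\gamma}]_\gamma$ for the class in $K_\gamma$); since $\sigma_\gamma$ preserves the valuation, $vy_{x,\gamma}=wx$. Along a cofinal increasing sequence $(\gamma_\nu)_{\nu<\kappa}$ of non-negative elements of $vK$, the target is to show that $(y_{x,\gamma_\nu})_\nu$ is a Cauchy net in $K$; the image $\phi(x)$ is then the unique limit in $K^c$. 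Checking that $\phi$ is a valued field embedding reduces, via the $\sigma_\gamma$'s and the density of $K$ in $K^c$, to verifying each identity ($+$, $\cdot$, $w$) modulo $\mathcal{U}_v^\gamma$ for each $\gamma$, where $\sigma_\gamma$ does the work.

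The main obstacle lies in the Cauchy property in the $(ii) \Rightarrow (i)$ direction: the family $(\sigma_\gamma)_\gamma$ is not assumed to commute with the projections $\rho_{\delta,\gamma}$, so one cannot simply chase a diagram to compare $\sigma_\delta([x]_\delta)$ with $\sigma_\gamma([x]_\gamma)$. The plan here is to exploit the rigidity provided by Proposition \ref{balls} together with Lemma \ref{Lee}: being a hyperfield isomorphism that preserves the valuation, $\sigma_\gamma$ must send open ultrametric balls in $L_\gamma$ to open ultrametric balls in $K_\gamma$ of the same radius, and by Proposition \ref{single} the values of all elements in a hypersum coincide outside a degenerate case. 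Combining these with the additive estimates of Lemma \ref{Lee} should pin down $v(y_{x,\gamma_\mu}-y_{x,\gamma_\nu})$ in terms of $\gamma_\nu+wx$, yielding the Cauchy property. Lemma \ref{FVhyp} provides the conceptual backdrop: cones in $\vHyp/\mathcal{T}(\Gamma)$ over the diagram \eqref{diagramvK} are automatically singlevalued, ensuring that the construction really lands in a field.
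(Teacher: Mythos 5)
Your direction $(i)\Rightarrow(ii)$ is essentially the paper's argument: the paper observes that $L$ lies dense in $K^c$ and reruns the proof of Lemma \ref{complet} for $(L,w)$; your version, which composes $(\tau^K_\gamma)^{-1}\circ\tau^L_\gamma$ after noting that $(K^c,v^c)$ is also the completion of $(L,w)$, is the same computation packaged slightly differently. Your plan for $(ii)\Rightarrow(i)$ also follows the paper's construction: pick representatives $y_{x,\gamma_\nu}\in K$ of $\sigma_{\gamma_\nu}([x]_{\gamma_\nu})$ along a cofinal increasing sequence, show they form a Cauchy sequence, and send $x$ to the limit in $K^c$.

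The problem is that the one step you yourself single out as the main obstacle is left genuinely open, and the tools you propose cannot close it. Propositions \ref{single} and \ref{balls} and Lemma \ref{Lee} are all statements \emph{internal to a single} $K_\gamma$: they describe values and balls inside one quotient hyperfield and give no handle on how $\sigma_{\gamma_\mu}([x]_{\gamma_\mu})$ sits relative to $\sigma_{\gamma_\nu}([x]_{\gamma_\nu})$ for $\mu\neq\nu$. By Lemma \ref{Lee}$(i)$, the Cauchy estimate $v(y_{x,\gamma_\mu}-y_{x,\gamma_\nu})>\gamma_\nu+wx$ is \emph{equivalent} to $[y_{x,\gamma_\mu}]_{\gamma_\nu}=[y_{x,\gamma_\nu}]_{\gamma_\nu}$ in $K_{\gamma_\nu}$, i.e.\ to $\rho_{\gamma_\mu,\gamma_\nu}(\sigma_{\gamma_\mu}([x]_{\gamma_\mu}))=\sigma_{\gamma_\nu}([x]_{\gamma_\nu})$ --- exactly the compatibility of the family $(\sigma_\gamma)_\gamma$ with the transition maps that you correctly note is not part of hypothesis $(ii)$ as literally stated. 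If each $\sigma_{\gamma_\nu}$ is composed with an arbitrary automorphism of $(K_{\gamma_\nu},v_{\gamma_\nu})$ over $\mathcal{T}(\Gamma)$, hypothesis $(ii)$ still holds, yet the classes $\sigma_{\gamma_\nu}([x]_{\gamma_\nu})$, viewed as balls in $K$, need no longer be nested, and knowing their radii (which is all that Propositions \ref{single} and \ref{balls} give you) does not produce a Cauchy sequence. You must either read $(ii)$ as an isomorphism of diagrams, i.e.\ require the $\sigma_\gamma$ to commute with the $\rho_{\delta,\gamma}$ --- this is what the paper tacitly does when it derives the estimate directly from Lemma \ref{Lee}$(i)$, and it is what one has in the intended application, where the $\sigma_\gamma$ arise from a morphism of cones over \eqref{diagramvK} --- or prove that compatible isomorphisms can be selected from the given ones, a nontrivial inverse-limit-of-nonempty-sets problem that your sketch does not address. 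As written, the key estimate has no proof. (A minor additional remark: Lemma \ref{FVhyp} plays no role in Theorem \ref{main} itself, since $L$ is assumed to be a field; it is needed only for the Scholium and the limit theorem that follow.)
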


\begin{proof}
We begin by proving that $(i)$ implies $(ii)$. Since $L$ contains $K$ which lies dense in $K^c$, it follows that $L$ lies dense in $K^c$ too. Hence $(ii)$ follows as in the proof of Lemma \ref{complet}.

For the implication from $(ii)$ to $(i)$, a little more effort is needed. First, we need to fix an increasing and cofinal sequence of non-negative elements $(\gamma_\nu)_{\nu<\kappa}$ in the value group $vK$, as we have done in the proof of Lemma \ref{FVhyp}. Then, for any $x\in L^\times$ and all $\nu<\kappa$, we set $y_\nu\in K^\times$ to be a representative for the class $\sigma_{\gamma_\nu}([x]_{\gamma_{\nu}})\in K_{\gamma_\nu}^\times$. By the assumption $(ii)$ and the definition of the hyperfield valuations $v_{\gamma_\nu}$ and $w_{\gamma_\nu}$, we deduce that $vy_\nu=wx$ holds in $\Gamma$ for all $\nu<\kappa$. In addition, since $(\gamma_\nu)_{\nu<\kappa}$ is increasing, Lemma \ref{Lee} $(i)$ yields that
\[
v(y_\nu-y_\mu)>\gamma_\nu+wx
\]
holds in $\Gamma$, for all $\nu<\mu<\kappa$. Now, by the cofinality of $(\gamma_\nu)_{\nu<\kappa}$ in $vK$, the latter inequality implies that $(y_\nu)_{\nu<\kappa}$ is a Cauchy sequence in $K$ which then converges to some $y\in K^c$. 

We claim that $y$ does not depend on the choice of the representatives $y_\nu\in K$, but only on the class $\sigma([x]_{\gamma_{\nu}})\in K_\gamma^\times$. For let $y'_\nu\in K$ be such that $[y'_\nu]_{\gamma_\nu}=\sigma_{\gamma_\nu}([x]_{\gamma_{\nu}})$ be another choice. As above, $(y'_\nu)_{\nu<\kappa}$ is a Cauchy sequence in $K$ and we denote by $y'$ its limit in $K^c$. If $\delta\in vK$ and $\nu<\kappa$ are such that $\gamma_{\nu}+wx>\delta$ and $v^c(y-y_\nu),v^c(y'_\nu-y')>\delta$ hold in $\Gamma$, then, by Lemma \ref{Lee} $(i)$ and since $vy'_\nu=wx=vy_\nu$, we obtain that
\[
v(y_\nu-y'_\nu)>\gamma_\nu+wx>\delta
\]
and then
\begin{align*}
v^c(y-y')&=v^c(y-y_\nu+y_\nu-y'_\nu+y'_\nu-y')\\
&\geq\min\{v^c(y-y_\nu),v^c(y_\nu-y'_\nu),v^c(y'_\nu-y')\}>\delta
\end{align*}
hold in $\Gamma$. Since $\delta$ is arbitrary in $vK=vK^c$, we may conclude that $v^c(y-y')=\infty$, i.e., $y'=y$ holds in $K^c$.

Our next claim is that the assignment $x\mapsto y$ defines an embedding of valued fields 
\[
\begin{tikzcd}[sep=huge]
\sigma:(L,w)\arrow[r]& (K^c,v^c).
\end{tikzcd}
\]
To see why this holds, most of the efforts are devoted to the verification that $\sigma$ preserves the additive operations. For take $x,y\in L$ and assume without loss of generality that $wx\leq wy$. If $z_\nu,a_\nu,b_\nu\in K$ are such that $[z_\nu]_{\gamma_\nu}=[x+y]_{\gamma_\nu}$, $[a_\nu]_{\gamma_\nu}=[x]_{\gamma_\nu}$ and $[b_\nu]_{\gamma_\nu}=[y]_{\gamma_\nu}$ for all $\nu<\kappa$, then, as before, these elements form Cauchy sequences in $K$. Let us then denote by $z$, $a$ and $b$ their limits in $K^c$, respectively. By definition of $\sigma$, we have that $\sigma(x+y)=z$, $\sigma(x)=a$ and $\sigma(y)=b$. Our aim now will be to prove that $z=a+b$ holds in $K^c$. We first obtain from Lemma \ref{Lee} $(iv)$ that
\[
z_\nu\in\bigcup([a_\nu]_{\gamma_\nu}+[b_\nu]_{\gamma_\nu})
\]
holds, for all $\nu<\kappa$. Therefore, if we fix any $\delta\in vK$ and let $\nu<\kappa$ be large enough so that $\gamma_{\nu}+wx>\delta$ and 
\[
v^c(a-a_\nu),~v^c(b-b_\nu),~v^c(z-z_\nu)>\delta
\] 
hold in $\Gamma$, then an application of Lemma \ref{Lee} $(ii)$ yields that
\[
v(a_\nu+y_\nu-z_\nu)>\gamma_\nu+wx>\delta
\]
holds in $\Gamma$, where we used the fact that $va_\nu=wx$ for all $\nu<\kappa$. Thus, we obtain that
\begin{align*}
v^c(a_\nu+b_\nu-z)&=v^c(a_\nu+b_\nu-z_\nu+z_\nu-z)\\
&\geq\min\{v^c(a_\nu+b_\nu-z_\nu),v^c(z_\nu-z)\}>\delta
\end{align*}
and, consequently,
\begin{align*}
v^c(a+b-z)&=v^c(a-a_\nu+b-b_\nu+a_\nu+b_\nu-z)\\
&\geq\min\{v^c(a-a_\nu),v^c(b-b_\nu),v^c(a_\nu+b_\nu-z)\}>\delta
\end{align*}
hold in $\Gamma$. Since $\delta$ is arbitrary in $vK$ and $v^c(a+b-z)\in vK\cup\{\infty\}$, we deduce that $v^c(a+b-z)=\infty$ and, consequently, $z=a+b$, as contended.

Now, it suffices to recall that $[-x]_\gamma=-[x]_\gamma$ and that if $y\neq0$, then $[xy^{-1}]_\gamma=[x]_\gamma[y]^{-1}_\gamma$ hold in $L_\gamma$ to immediately deduce that $\sigma(-x)=\sigma(x)^-$ and that $\sigma(ab^{-1})=\sigma(a)\sigma(b)^{-1}$ must hold in $K^c_\gamma$. We have proved that $\sigma$ is a homomorphism of fields and, as such, an embedding.

Finally, since $vy_\nu=wx$ holds in $\Gamma$, for all $x\in L^\times$ and all $\nu<\kappa$ (as we have already shown above) it can be easily verified from the definition of convergent sequences, that the element $\sigma(x)\in K^c$, to which the Cauchy sequence $(y_\nu)_{\nu<\kappa}$ in $K$ converges, satisfies $v^c(\sigma(x))=vy_\nu$, for all $\nu<\kappa$. We conclude that $v(\sigma(x))=wx$ holds, for all $x\in L$. In particular, we have that $\sigma$ is an embedding of valued fields.
\end{proof}

In the proof of the implication from $(ii)$ to $(i)$ in the above theorem, we have used the assumption that $(L,w)$ is an extension of $(K,v)$ only for identifying $vK$ and $v^cK^c$ with a canonical subset of $wL$. However, in the final analysis, this identification is not necessary and is performed only for a smoother exposition of the reasonings in the proof.

\begin{Scholium}[\footnote{The term \lq scholium\rq\ (literally, a marginal note) is used e.g.\ in \cite{Joh02} to denote something which follows directly from the \emph{proof} of a preceding result, as opposed to a corollary which follows directly from the \emph{statement} of the preceding result.}]\label{sch}
If $(L,w)$ is any valued field such that there is an order-preserving group-embedding $wL\hookrightarrow \Gamma$ and, up to this embedding, condition $(ii)$ of Theorem \ref{main} holds, then there is a $\vHyp/\mathcal{T}(\Gamma)$-arrow $(L,w)\tto(K^c,v^c)$.
\end{Scholium}

Now, under the assumptions of the above result, for a valued hyperfield $(L,w)$, we deduce, first, that $L$ is a field by Lemma \ref{FVhyp}. Then, we denote by $f_\gamma:L\to L_\gamma\simeq K_\gamma$ and by $\tilde{\rho}_\gamma:K^c\to K^c_\gamma\simeq K_\gamma$ the projections onto the valued $\gamma$-hyperfields associated to $(K,v)$ of $(L,w)$ and $(K^c,v^c)$, respectively. It is so straightforward to verify that the embedding $\sigma$ that we have constructed in the proof of Theorem \ref{main} is unique with the property that $f_\gamma=\tilde{\rho}_\gamma\circ\sigma$ for all $\gamma\in vK$ such that $\gamma\geq 0$. Indeed, this conclusion follows from the fact that for any $x\in L^\times$, the classes $([x]_\gamma)_{\gamma\in vK_{\geq0}}$ form a chain of ultrametric balls in $L$ of increasing radii and, moreover, the set of this radii is cofinal in $vK$. We have now fully proved Krasner's result \cite[\S\ 4]{Kra57} in a purely categorial language:

\begin{Theorem}
The completion $(K^c,v^c)$ of $(K,v)$ is ($\vHyp$-isomorphic to) the limit of the diagram \eqref{diagramvK} in $\vHyp/\mathcal{T}(\Gamma)$, for any ordered abelian group extension $\Gamma$ of $vK$.
\end{Theorem}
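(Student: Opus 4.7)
The plan is to verify the universal property of the limit cone directly for $(K^c, v^c)$, equipped with the sides $\tilde\rho_\gamma \colon K^c \tto K_\gamma$ obtained by composing the canonical projection $K^c \tto K^c_\gamma$ with the isomorphism $K^c_\gamma \simeq K_\gamma$ from Lemma \ref{complet}. First I would check that these sides assemble into a cone over \eqref{diagramvK} in $\vHyp/\mathcal{T}(\Gamma)$; this amounts to repeating the discussion preceding Lemma \ref{FVhyp} with $K^c$ in place of $K$ and using Lemma \ref{complet} to identify the hyperfield quotients.

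Given any other cone over \eqref{diagramvK} with vertex $(L, w)$ and sides $f_\gamma \colon (L, w) \tto (K_\gamma, v_\gamma)$, I would first invoke Lemma \ref{FVhyp} to conclude that $L$ is a field, and then reproduce the Cauchy-sequence construction from the proof of Theorem \ref{main} with the cone sides $f_\gamma$ playing the role of the compositions $\sigma_\gamma \circ \rho_\gamma^L$ there. Concretely, after fixing an increasing cofinal sequence $(\gamma_\nu)_{\nu<\kappa}$ of non-negative elements of $vK$, for each $x \in L^\times$ I would pick representatives $y_\nu \in K^\times$ of the classes $f_{\gamma_\nu}(x) \in K_{\gamma_\nu}^\times$. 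The cone compatibility $\rho_{\gamma_\mu, \gamma_\nu} \circ f_{\gamma_\mu} = f_{\gamma_\nu}$ for $\nu \leq \mu$, combined with Lemma \ref{Lee}$(i)$, would then yield $v(y_\nu - y_\mu) > \gamma_\nu + wx$, so $(y_\nu)_{\nu<\kappa}$ is Cauchy in $K$ and admits a limit $\sigma(x) \in K^c$; extending by $\sigma(0) := 0$ defines a function $\sigma \colon L \tto K^c$.

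The verifications that $\sigma$ is independent of the choice of representatives, that it preserves the field operations, and that it preserves the valuation should transfer from the proof of Theorem \ref{main} essentially verbatim: the only properties of $\sigma_\gamma \circ \rho_\gamma^L$ used there are that each is a $\vHyp/\mathcal{T}(\Gamma)$-arrow respecting the cone compatibility, both of which are shared by $f_\gamma$. The cone-factoring identity $\tilde\rho_\gamma \circ \sigma = f_\gamma$ is then immediate: for $\gamma_\nu \geq \gamma$ large enough, $\tilde\rho_\gamma(\sigma(x)) = [y_\nu]_\gamma = \rho_{\gamma_\nu, \gamma}(f_{\gamma_\nu}(x)) = f_\gamma(x)$. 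Uniqueness of $\sigma$ as mediating arrow would be handled exactly as in the paragraph preceding the theorem: any $\vHyp/\mathcal{T}(\Gamma)$-arrow $h \colon (L, w) \tto (K^c, v^c)$ with $\tilde\rho_\gamma \circ h = f_\gamma$ must satisfy $[h(x)]_\gamma = [\sigma(x)]_\gamma$ in $K^c_\gamma$ for every non-negative $\gamma \in vK$, and Lemma \ref{Lee}$(i)$ together with the cofinality of $(\gamma_\nu)$ in $vK_{\geq 0} = v^c K^c_{\geq 0}$ forces $v^c(h(x) - \sigma(x)) = \infty$, hence $h = \sigma$.

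The main obstacle I anticipate is confirming that none of the manipulations in the proof of Theorem \ref{main} actually depends on the sides being of the special form \emph{isomorphism composed with canonical projection} rather than arbitrary cone arrows in $\vHyp/\mathcal{T}(\Gamma)$. The most delicate point is the additive-preservation step, which leans on Lemma \ref{Lee}$(ii)$ and $(iv)$ and requires compatibly choosing representatives for $f_{\gamma_\nu}(x+y)$, $f_{\gamma_\nu}(x)$, $f_{\gamma_\nu}(y)$; there one must trace through the estimate $v(a_\nu + b_\nu - z_\nu) > \gamma_\nu + \min\{wx, wy\}$ and check that the cone hypothesis alone --- namely that each $f_{\gamma_\nu}$ is a hyperfield homomorphism over $\mathcal{T}(\Gamma)$ --- suffices to produce it. I do not expect any substantive new issue there.
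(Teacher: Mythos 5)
Your proposal is correct and follows essentially the same route as the paper: exhibit $(K^c,v^c)$ as a cone via Lemma \ref{complet}, reduce an arbitrary cone vertex to a field via Lemma \ref{FVhyp}, build the mediating arrow by the Cauchy-sequence construction from the proof of Theorem \ref{main}, and derive uniqueness from Lemma \ref{Lee}$(i)$ together with cofinality. If anything, you are slightly more explicit than the paper in checking that arbitrary cone sides $f_\gamma$ (rather than sides of the special form \lq\lq isomorphism composed with canonical projection\rq\rq\ as in condition $(ii)$ of Theorem \ref{main}) suffice for every step, which is exactly the point the paper leaves as \lq\lq straightforward to verify\rq\rq.
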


We note moreover, that in the above setting the assumption $\Gamma=vK=wL$ is \textit{a posteriori} not restrictive.

\begin{Corollary}
    $(K,v)$ is complete if and only if it is the source of the limit of the diagram \eqref{diagramvK}, with $\Gamma=vK$, in $\vHyp/\mathcal{T}(\Gamma)$.
\end{Corollary}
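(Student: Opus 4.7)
The plan is to derive the corollary immediately from the preceding theorem together with the uniqueness of limits up to isomorphism, noting that the case $\Gamma=vK$ is exactly what the theorem already delivers. By that theorem, $(K^c,v^c)$ is (the vertex of) a limit cone of the diagram \eqref{diagramvK} in $\vHyp/\mathcal{T}(vK)$, with sides given by the canonical projections $\tilde{\rho}_\gamma\colon K^c\tto K^c_\gamma \simeq K_\gamma$.

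For the forward implication, suppose $(K,v)$ is complete. By the uniqueness part of Fact \ref{completion}, the canonical embedding $(K,v)\hookrightarrow(K^c,v^c)$ is then an isomorphism of valued fields. Transporting the limit cone on $(K^c,v^c)$ along this isomorphism, and observing that the transported sides coincide with the canonical maps $\rho_\gamma\colon K\tto K_\gamma$ already exhibited before the theorem as forming a cone on \eqref{diagramvK}, we conclude that $(K,v)$ is itself the vertex of a limit cone of \eqref{diagramvK}.

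For the converse, suppose $(K,v)$ is the vertex of a limit cone of \eqref{diagramvK} in $\vHyp/\mathcal{T}(vK)$. Uniqueness of limits up to isomorphism supplies an isomorphism $(K,v)\simeq(K^c,v^c)$ in the slice category; the underlying $\vHyp$-arrow is an isomorphism, and since both hyperfields involved are actually fields, it is an isomorphism of valued fields. Because $(K^c,v^c)$ is complete, $(K,v)$ is complete too.

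No real obstacle is expected; the corollary is essentially a tautology once the main theorem and uniqueness of limits are in place. The only point deserving attention is to check that the cone structure on $(K,v)$ obtained by transport from $(K^c,v^c)$ agrees with the canonical one given by the $\rho_\gamma$'s — but this is automatic from the commutativity of the triangular diagrams defining cones in a slice category and the uniqueness clause in the universal property of the limit.
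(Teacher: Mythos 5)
Your argument is correct and is essentially the route the paper intends: the corollary is stated as an immediate consequence of the preceding theorem (completion $=$ limit) combined with uniqueness of limits up to isomorphism and the fact that $(K,v)$ is complete precisely when the canonical embedding into $(K^c,v^c)$ is an isomorphism of valued fields. Your additional check that the transported cone sides agree with the canonical projections $\rho_\gamma$ is the right point to verify and is handled correctly.
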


\section{Conclusion and future work}

It is possible (cf.\ \cite[Proposition 1.17]{LT22}) to associate a diagram of the form \eqref{diagramvK} as above also in case the additive operation of $(K,v)$ is not assumed to be singlevalued, i.e., starting with any valued hyperfield $(K,v)$. As a corollary to our approach, we deduce that if among the cones over the diagram \eqref{diagramvK} there is one whose vertex is a valued field, then the limit exists in $\vHyp$ and is up to isomorphism the valuation-theoretic completion of the latter valued field. In particular, this happens under weaker assumptions on $(K,v)$ than that $K$ is a field, e.g., by \cite[Proposition 1.27]{LT22}, it suffices that $v$ induces an ultrametric on $K$ in the way originally required by Krasner. The problem whether the just mentioned assumption on $(K,v)$ is also necessary or can be weakened further is outside the scope of this work and left open for future investigations. 

Finally, a characterisation of generalised tropical hyperfields in purely category-theoretic terms becomes of interest. A characterisation is given in \cite[Theorem 5.2]{Lin23} which relates the solution to the latter problem to a categorial description of the broader class of  \emph{stringent} hyperfields (in the sense of \cite{BS21}).

\bibliography{submittedbiblio}

\begin{thebibliography}{10}

\bibitem{Awo10}
{\sc Awodey, S.}
\newblock {\em Category theory}, second~ed., vol.~52 of {\em Oxford Logic
  Guides}.
\newblock Oxford University Press, Oxford, 2010.

\bibitem{BC22}
{\sc Bordbar, H., and Cristea, I.}
\newblock Divisible hypermodules.
\newblock {\em An. \c{S}tiin\c{t}. Univ. ``Ovidius'' Constan\c{t}a Ser. Mat.
  30}, 1 (2022), 57--74.

\bibitem{BCK20}
{\sc Bordbar, H., Nov\'{a}k, M., and Cristea, I.}
\newblock A note on the support of a hypermodule.
\newblock {\em J. Algebra Appl. 19}, 1 (2020), 2050019, 19.

\bibitem{BS21}
{\sc Bowler, N., and Su, T.}
\newblock Classification of doubly distributive skew hyperfields and stringent
  hypergroups.
\newblock {\em J. Algebra 574\/} (2021), 669--698.

\bibitem{CC11}
{\sc Connes, A., and Consani, C.}
\newblock The hyperring of ad{\`e}le classes.
\newblock {\em J. Number Theory 131}, 2 (2011), 159--194.

\bibitem{PE05}
{\sc Engler, A.~J., and Prestel, A.}
\newblock {\em Valued fields}.
\newblock Springer Monographs in Mathematics. Springer-Verlag, Berlin, 2005.

\bibitem{Gla10}
{\sc G{\l}adki, P.}
\newblock Orderings of higher level in multifields and multirings.
\newblock {\em Ann. Math. Sil. { }}, 24 (2010), 15--25.

\bibitem{Gla17}
{\sc G{\l}adki, P.}
\newblock Witt equivalence of fields: a survey with a special emphasis on
  applications of hyperfields.
\newblock In {\em Ordered algebraic structures and related topics}, vol.~697 of
  {\em Contemp. Math.} Amer. Math. Soc., Providence, RI, 2017, pp.~169--185.

\bibitem{GM12}
{\sc G{\l}adki, P., and Marshall, M.}
\newblock Orderings and signatures of higher level on multirings and
  hyperfields.
\newblock {\em J. K-Theory 10}, 3 (2012), 489--518.

\bibitem{Gol84}
{\sc Goldblatt, R.}
\newblock {\em Topoi}, second~ed., vol.~98 of {\em Studies in Logic and the
  Foundations of Mathematics}.
\newblock North-Holland Publishing Co., Amsterdam, 1984.
\newblock The categorial analysis of logic.

\bibitem{Hen13}
{\sc Henry, S.}
\newblock Symmetrization of mono{\"\i}ds as hypergroups.
\newblock {a}rXiv:1309.1963.

\bibitem{JSY22}
{\sc Jell, P., Scheiderer, C., and Yu, J.}
\newblock Real tropicalization and analytification of semialgebraic sets.
\newblock {\em Int. Math. Res. Not. IMRN { }}, 2 (2022), 928--958.
\newblock [Initially appeared as 2021, no. 24, 19178--19208].

\bibitem{JJ17}
{\sc Jen\v{c}ov\'{a}, A., and Jen\v{c}a, G.}
\newblock On monoids in the category of sets and relations.
\newblock {\em Internat. J. Theoret. Phys. 56}, 12 (2017), 3757--3769.

\bibitem{Joh02}
{\sc Johnstone, P.~T.}
\newblock {\em Sketches of an elephant: a topos theory compendium. {V}ol. 1},
  vol.~43, 44 of {\em Oxford Logic Guides}.
\newblock The Clarendon Press, Oxford University Press, New York, 2002.

\bibitem{Jun18}
{\sc Jun, J.}
\newblock Algebraic geometry over hyperrings.
\newblock {\em Adv. Math. 323\/} (2018), 142--192.

\bibitem{Jun21}
{\sc Jun, J.}
\newblock Geometry of hyperfields.
\newblock {\em J. Algebra 569\/} (2021), 220--257.

\bibitem{LKS23}
{\sc K{\k{e}}dzierski, D.~E., Linzi, A., and Stoja{\l}owska, H.}
\newblock Characteristic, {C}-characteristic and positive cones in hyperfields.
\newblock {\em Mathematics 11}, 3 (2023), 779.

\bibitem{Kra57}
{\sc Krasner, M.}
\newblock Approximation des corps valu\'{e}s complets de caract\'{e}ristique
  {$p\not=0$} par ceux de caract\'{e}ristique {$0$}.
\newblock In {\em Colloque d'alg{\`e}bre sup\'{e}rieure, tenu {\`a} {B}ruxelles
  du 19 au 22 d\'{e}cembre 1956}, Centre Belge de Recherches Math\'{e}matiques.
  \'{E}tablissements Ceuterick, Louvain; Librairie Gauthier-Villars, Paris,
  1957, pp.~129--206.

\bibitem{Kra83}
{\sc Krasner, M.}
\newblock A class of hyperrings and hyperfields.
\newblock {\em Internat. J. Math. Math. Sci. 6}, 2 (1983), 307--311.

\bibitem{KLS22}
{\sc Kuhlmann, K., Linzi, A., and Stoja{\l}owska, H.}
\newblock Orderings and valuations in hyperfields.
\newblock {\em J. Algebra 611\/} (2022), 399--421.

\bibitem{Lee20}
{\sc Lee, J.}
\newblock Hyperfields, truncated {DVR}s, and valued fields.
\newblock {\em J. Number Theory 212\/} (2020), 40--71.

\bibitem{Lei14}
{\sc Leinster, T.}
\newblock {\em Basic category theory}, vol.~143 of {\em Cambridge Studies in
  Advanced Mathematics}.
\newblock Cambridge University Press, Cambridge, 2014.

\bibitem{Lin23aims}
{\sc Linzi, A.}
\newblock Canonical hypergroup objects in regular categories.
\newblock In preparation.

\bibitem{PhD22}
{\sc Linzi, A.}
\newblock {\em Algebraic hyperstructures in the model theory of valued fields}.
\newblock PhD thesis, University of Szczecin (Uniwersytet Szczeci\'{n}ski),
  Poland, https://bip.usz.edu.pl/doktorat-habilitacja/16282/alessandro-linzi,
  2022.

\bibitem{Lin23}
{\sc Linzi, A.}
\newblock Notes on valuation theory for krasner hyperfields.
\newblock arXiv:2301.08639, 2023.

\bibitem{LT22}
{\sc Linzi, A., and Touchard, P.}
\newblock On the hyperfields associated to valued fields.
\newblock arXiv:2211.05082, 2022.

\bibitem{ML71}
{\sc MacLane, S.}
\newblock {\em Categories for the working mathematician}.
\newblock Graduate Texts in Mathematics, Vol. 5. Springer-Verlag, New
  York-Berlin, 1971.

\bibitem{Mar34}
{\sc Marty, F.}
\newblock Sur une g\'en\'eralization de la notion de groupe.
\newblock In {\em Huiti\'eme Congr\`es des Math\'ematiciens Scandenaves
  Stockholm\/} (1934), pp.~45--49.

\bibitem{Mar35}
{\sc Marty, F.}
\newblock R\^ole de la notion de hypergroupe dans l'\'etude de groupes non
  ab\'eliens.
\newblock {\em C. R. Acad. Sci. (Paris) 201\/} (1935), 636--638.

\bibitem{Mar36}
{\sc Marty, F.}
\newblock Sur les groupes et hypergroupes attaches {\`a} une fraction
  rationnelle.
\newblock {\em Ann. Sci. \'{E}cole Norm. Sup. (3) 53\/} (1936), 83--123.

\bibitem{Mas85}
{\sc Massouros, C.~G.}
\newblock Methods of constructing hyperfields.
\newblock {\em Internat. J. Math. Math. Sci. 8}, 4 (1985), 725--728.

\bibitem{NC17}
{\sc Norouzi, M., and Cristea, I.}
\newblock Fundamental relation on {$m$}-idempotent hyperrings.
\newblock {\em Open Math. 15}, 1 (2017), 1558--1567.

\bibitem{Row22}
{\sc Rowen, L.~H.}
\newblock Algebras with a negation map.
\newblock {\em Eur. J. Math. 8}, 1 (2022), 62--138.

\bibitem{Sim11}
{\sc Simmons, H.}
\newblock {\em An introduction to category theory}.
\newblock Cambridge University Press, Cambridge, 2011.

\bibitem{Tit57}
{\sc Tits, J.}
\newblock Sur les analogues alg\'{e}briques des groupes semi-simples complexes.
\newblock In {\em Colloque d'alg{\`e}bre sup\'{e}rieure, tenu {\`a} {B}ruxelles
  du 19 au 22 d\'{e}cembre 1956}, Centre Belge de Recherches Math\'{e}matiques.
  \'{E}tablissements Ceuterick, Louvain, 1957, pp.~261--289.

\bibitem{Vir10}
{\sc Viro, O.}
\newblock Hyperfields for tropical geometry {I}. hyperfields and
  dequantization.
\newblock {a}rXiv:1006.3034, 2010.

\end{thebibliography}
\bibliographystyle{acm}

\end{document}